\definecolor {processblue}{cmyk}{0.96,0,0,0}
  \newtheorem{The}{Theorem}[section]
  \newtheorem{Lem}[The]{Lemma}
  \newtheorem{Cor}[The]{Corollary}
  \newtheorem{Examp}[The]{Example}
\newcommand{\bsm}{\begin{smallmatrix}}
\newcommand{\esm}{\end{smallmatrix}}
\newcommand{\bbm}{\begin{matrix}}
\newcommand{\ebm}{\end{matrix}}
\newcommand{\Hom}{\rm{Hom}}
\theoremstyle{definition}
\theoremstyle{plain}
\theoremstyle{definition}
\numberwithin{equation}{section}
\newtheorem*{theorem a*}{Theorem A}
\newtheorem*{theorem b*}{Theorem B}
\newtheorem*{theorem c*}{Theorem C}
\newtheorem*{corollary a*}{Corollary A}
\newtheorem*{corollary b*}{Corollary B}
\newtheorem*{corollary c*}{Corollary C}
\begin{document}

\title{Finiteness and purity of subcategories of the module categories}

\author{Ziba Fazelpour}
\address{School of Mathematics, Institute for Research in Fundamental Sciences (IPM), P.O. Box: 19395-5746, Tehran, Iran}
\email{z.fazelpour@ipm.ir}
\author{Alireza Nasr-Isfahani}
\address{Department of Pure Mathematics\\
Faculty of Mathematics and Statistics\\
University of Isfahan\\
P.O. Box: 81746-73441, Isfahan, Iran\\ and School of Mathematics, Institute for Research in Fundamental Sciences (IPM), P.O. Box: 19395-5746, Tehran, Iran}
\email{nasr$_{-}$a@sci.ui.ac.ir / nasr@ipm.ir}

\subjclass[2010]{{16G60}, {16G10}, {16E65}, {18E99}}

\keywords{Resolving subcategories, Contravariantly finite subcategories, Covariantly finite subcategories, Pure semisimple subcategories, Subcategories of finite type, Functor rings, Functor categories}

\begin{abstract}
In this paper, by using functor rings and functor categories, we study finiteness and purity of subcategories of the module categories. We give a characterisation of contravariantly finite resolving subcategories of the module category of finite representation type in terms of their functor rings. We also characterize contravariantly finite resolving subcategories of the module category $\Lambda${\rm -mod} that contain the Jacobson radical of $\Lambda$ of finite type, by their functor categories. We study the pure semisimplicity conjecture for a locally finitely presented category ${\underrightarrow{\lim}}\mathscr{X}$ when $\mathscr{X}$ is a covariantly finite subcategory of $\Lambda$-mod and every simple object in Mod$(\mathscr{X}^{\rm op})$ is finitely presented and give a characterization of covariantly finite subcategories of finite representation type in terms of decomposition properties of their closure  under filtered colimits. As a consequence we study finiteness and purity of $n$-cluster tilting subcategories and the subcategory of the Gorenstein projective modules of the module categories. These results extend and unify some known results of \cite{aus, ausla2, ab, che, EN, fu, fre, ht, wis}.
\end{abstract}

\maketitle

\section{Introduction}
A left artinian ring $\Lambda$ is called {\it of finite representation type} if it has, up to isomorphism, only finitely many finitely generated indecomposable left $\Lambda$-modules. A ring $\Lambda$ is called {\it left pure semisimple} if every left $\Lambda$-module is a direct sum of finitely generated left $\Lambda$-modules \cite{aus}. It is known that a ring $\Lambda$ is of finite representation type if and only if $\Lambda$ is right and left pure semisimple \cite{aus}. The problem of whether left pure semisimple rings are of finite representation type, known as the pure semisimplicity conjecture, still remains open (see \cite{aus}, \cite{Sim6} and \cite{Sim7}). In representation theory of artin algebras, the classification of algebras of finite representation type are of particular importance since for this class of algebras one has a complete combinatorial description of the module category in terms of the Auslander-Reiten quiver.
Certain classes of subcategories of the module category plays an important role in the representation theory of artin algebras. Functorially finite subcategories (in the
sense of Auslander-Smalo \cite{au}), contravariantly finite resolving subcategories, the subcategory of Gorenstein projective modules, $n$-cluster tilting subcategories, $\cdots$ are typical examples of such subcategories. An artin algebra $\Lambda$ is called {\it of finite CM-type} if there are only finitely many, up to isomorphism, indecomposable finitely generated Gorenstein projective $\Lambda$-modules. Chen in \cite[Main theorem]{che} proved that a Gorenstein artin algebra $\Lambda$ is of finite CM-type if and only if every Gorenstein projective $\Lambda$-module is a direct sum of finitely generated Gorenstein projective modules. Beligiannis in \cite{ab} generalized and extended Chen's result. A Krull-Schmidt subcategory $\mathscr{C}$ of $\Lambda$-mod is called {\it of finite representation type} or {\it of finite type} if it has only finitely many non-isomorphic indecomposable modules. Beligiannis showed that $\Lambda$ is a virtually Gorenstein artin algebra of finite CM-type if and only if $\Lambda$ is a virtually Gorenstein artin algebra and every Gorenstein projective left $\Lambda$-module is a direct sum of indecomposable left $\Lambda$-modules if and only if every Gorenstein projective left $\Lambda$-module is a direct sum of finitely generated left $\Lambda$-modules (see \cite[Theorem 4.10]{ab})). In this paper we study finiteness and purity properties of covariantly finite (and hence functorially finite) subcategories and contravariantly finite resolving subcategories of the module category over a left artinian ring.

First, we give the following characterisation of contravariantly finite resolving subcategories of the module category of finite representation type. We refer the reader to Section 2 for the definitions.\\

\textbf{Theorem A.} (Theorem \ref{2.1})
Assume that $\Lambda$ is a left artinian ring and $\mathscr{X}$ is a contravariantly finite resolving subcategory of $\Lambda${\rm -mod}. Let $\lbrace V_{\alpha}~|~\alpha \in J\rbrace$ be a complete set of representative of the isomorphic classes of indecomposable modules in $\mathscr{X}$ and $T=\widehat{{\rm End}}_{\Lambda}(V)$, where $V=\bigoplus_{\alpha \in J}V_{\alpha}$.
\begin{itemize}
\item[$(1)$] The following statements are equivalent:
\begin{itemize}
\item[$({\rm i})$] $\mathscr{X}$ is of finite representation type and ${\rm res.dim}_{\mathscr{X}}\Lambda${\rm -mod} $\leq 2$.
\item[$({\rm ii})$] $\mathscr{X}$ is of bounded representation type and ${\rm res.dim}_{\mathscr{X}}\Lambda${\rm -mod} $\leq 2$.
\item[$({\rm iii})$] $T$ is a right perfect ring and ${\rm l.gl.dim}T \leq 2$.
\item[$({\rm iv})$] $T$ is a left locally finite ring and ${\rm l.gl.dim}T \leq 2$.
\end{itemize}
\item[$(2)$] The following statements are equivalent:
\begin{itemize}
\item[$({\rm i})$]  $T$ is a right locally noetherian ring and every module in $\mathscr{A}={\underrightarrow{\lim}}\mathscr{X}$ is a direct sum of finitely generated indecomposable modules.
\item[$({\rm ii})$] $T$ is a right locally finite ring.
\item[$({\rm iii})$] $V_T$ has finite length.
\end{itemize}
\item[$(3)$] If $T$ is a right locally finite ring and ${\rm l.gl.dim}T \leq 2$, then $\mathscr{X}$ is of finite representation type and ${\rm res.dim}_{\mathscr{X}}\Lambda${\rm -mod} $\leq 2$. Also the converse holds when ${\rm res.dim}_{\mathscr{X}}\Lambda${\rm -mod} $= 0$.
\end{itemize}

When a contravariantly finite resolving subcategory $\mathscr{X}$ of the module category $\Lambda${\rm -mod} contains the Jacobson radical of $\Lambda$ we prove the following theorem.\\

\textbf{Theorem B.} (Theorem \ref{2.2})
Let $\Lambda$ be a left artinian ring and $\mathscr{X}$ be a contravariantly finite resolving subcategory of $\Lambda$-{\rm mod} which contains $J(\Lambda)$. Then the following statements are equivalent.
\begin{itemize}
\item[$({\rm i})$] $\mathscr{X}$ is of finite representation type.
\item[$({\rm ii})$] Any family of homomorphisms between indecomposable modules in $\mathscr{X}$ is both noetherian and artinian.
\item[$({\rm iii})$] ${\rm Mod}(\mathscr{X})$ is locally finite.
\item[$({\rm iv})$] $\mathscr{X}(-,X)$ is finite for each $X \in \mathscr{X}$.
\item[$({\rm v})$] $\mathscr{X}(-,S)$ is finite for each simple left $\Lambda$-module $S$.
\end{itemize}

We also characterize covariantly finite subcategories of the module category of finite representation type. More precisely we prove the following result.\\

\textbf{Theorem C.} (Theorem \ref{3.1})
Let $\Lambda$ be a left artinian ring and $\mathscr{X}$ be a covariantly finite subcategory of $\Lambda$-mod. Assume that every simple $\mathscr{X}^{\rm op}$-module is finitely presented and $\mathscr{A}={\underrightarrow{\lim}}\mathscr{X}$. Then the following statements are equivalent.
\begin{itemize}
\item[$({\rm i})$] $\mathscr{X}$ is of finite representation type.
\item[$({\rm ii})$] Every module in $\mathscr{A}$ is a direct sum of finitely generated modules.
\item[$({\rm iii})$] $\mathscr{A}$ is pure semisimple.
\item[$({\rm iv})$] Any family of homomorphisms between indecomposable modules in $\mathscr{X}$ is noetherian.
\item[$({\rm v})$] ${\rm Mod}(\mathscr{X}^{\rm op})$ is locally finite.
\item[$({\rm vi})$] $\mathscr{X}(X,-)$ is finite for each $X \in \mathscr{X}$.
\item[$({\rm vii})$] $\mathscr{X}(S,-)$ is finite for each simple left $\Lambda$-module $S$.
\item[$({\rm viii})$] There is an additive equivalence between $\mathscr{A}$ and the category of projective modules over a semiperfect ring.
\item[$({\rm ix})$] There is an additive equivalence between $\mathscr{X}$ and the category of finitely generated projective modules over a semiperfect ring.
\end{itemize}
If $\Lambda \in \mathscr{X}$, then $({\rm i})$-$({\rm ix})$ are equivalent to
\begin{itemize}
\item[$({\rm x})$] Every module in $\mathscr{A}$ is a direct sum of indecomposable modules.
\end{itemize}

As a consequence, we give a characterization for functorially finite subcategories of the module category of finite type (see Corollary \ref{3.2}). Also we characterize virtually Gorenstein artin algebras of finite CM-type (see Corollary \ref{3.3}).

Higher dimensional Auslander-Reiten theory was introduced by Iyama in \cite{I2, I1}. It deals with
$n$-cluster tilting subcategories of module categories. The question of finiteness of n-cluster tilting subcategories for $n\geq 2$,
which is among the first that have been asked by Iyama [I3], is still open. Up to now, all known n-cluster tilting subcategories with $n\geq 2$
are of finite type. Several equivalence conditions for purity and finiteness of n-cluster tilting subcategories are given in \cite{DN, EN}. As a consequence of Theorem C we characterize $n$-cluster tilting subcategories of the module categories of finite type (see Corollary \ref{4.1}).

The paper is organized as follows. In Section 2, we recall some definitions and collect several preliminary notions and some known results that will be needed throughout the paper. In Section 3, we study contravariantly finite resolving subcategories of the module category and prove Theorem A. In case a contravariantly finite resolving subcategory $\mathscr{X}$ of $\Lambda$-mod contains the Jacobson radical $J(\Lambda)$ of $\Lambda$ we study finiteness of this subcategory in terms of Mod$(\mathscr{X})$ and prove Theorem B. In Section 4, we study the pure semisimplicity conjecture for locally finitely presented categories ${\underrightarrow{\lim}}\mathscr{X}$ when $\mathscr{X}$ is a covariantly finite subcategory of $\Lambda$-mod and every simple object in Mod$(\mathscr{X}^{\rm op})$ is finitely presented and prove Theorem C.

\subsection{Notation }
{\rm  Throughout this paper all rings are associative with unit unless otherwise stated. Let $R$ be a ring (not necessary with unit). We denote by $R$-Mod (resp., Mod-$R$) the category of all left (resp., right) $R$-modules and by $J(R)$ the Jacobson radical of $R$. We denote by $R$-mod the category of all finitely generated left $R$-modules. A left (resp., right) $R$-module $M$ is called {\it unitary} if $RM=M$ (resp., $MR=M$). We denote by $R$Mod the category of all unitary left $R$-modules. We denote by ${\rm Proj}(R)$ (resp., ${\rm proj}(R)$) the full subcategory of $R$Mod  whose objects are projective (resp., finitely generated projective) unitary left  $R$-modules. Also we denote by ${\rm Inj}(R)$ the full subcategory of $R$Mod whose objects are injective unitary left $R$-modules. Let $\mathscr{C}$ and $\mathscr{C'}$ be two additive categories and $\mathscr{D}$ be a full subcategory of $\mathscr{C}$. If $F: \mathscr{C} \rightarrow \mathscr{C'}$ is a functor, then we denote by $F|_{\mathscr{D}}$ the restriction of $F$ to $\mathscr{D}$. We denote by Mod$(\mathscr{C})$ (resp., Mod$(\mathscr{C}^{\rm op})$) the category of all contravariant (resp., covariant) additive functors from  $\mathscr{C}$ to the category $\mathfrak{Ab}$ of all abelian groups which is called the category of {\it $\mathscr{C}$-modules} (resp.,{\it $\mathscr{C}^{\rm op}$-modules}). We denote by ${\rm Flat}(\mathscr{C})$ the full subcategory of Mod$(\mathscr{C})$ whose objects are flat $\mathscr{C}$-modules; recall that a $\mathscr{C}$-modules $M$ is called flat if $M$ is a direct limit of representable functors. Let $\mathscr{S}$ be a class of objects of $\mathscr{C}$.  We denote by ${\rm Add}(\mathscr{S})$ the full subcategory of $\mathscr{C}$ consisting the direct summands of coproducts of objects of $\mathscr{S}$. When $\mathscr{S}$ consists of only one object $V$, we write ${\rm Add}(V)$ for ${\rm Add}(\mathscr{S})$. Throughout this paper, we assume that $\Lambda$ is a ring and $\mathscr{X}$ is an additive full subcategory of $\Lambda$-mod which is closed under direct summands,  isomorphisms and contains zero modules.}

\section{Preliminaries}

In this section we recall some basic facts and definitions and prove some preliminary results which will be used in the paper.

\subsection{Contravariantly finite resolving subcategories}

Let $\Lambda$ be a left noetherian ring and $M$ be a finitely generated left $\Lambda$-module. A morphism $\theta: X \rightarrow M$ is called a {\it right $\mathscr{X}$-approximation of $M$} if $X \in \mathscr{X}$ and any morphism from an object in $\mathscr{X}$ to $M$ factors through $\theta$. The subcategory $\mathscr{X}$ of $\Lambda$-mod is called {\it contravariantly finite} if every finitely generated left $\Lambda$-module has a right $\mathscr{X}$-approximation. Left $\mathscr{X}$-approximations and covariantly finite subcategories are defined dually. Also $\mathscr{X}$ is called {\it functorially finite} if it is both contravariantly and covariantly finite (see \cite[Page 81]{ausla3}). We recall that the subcategory $\mathscr{X}$ of $\Lambda$-mod is called {\it resolving} provided that $\Lambda \in \mathscr{X}$ and it is closed under extensions and kernels of epimorphisms (see \cite{abr}).\\

Let $\Lambda$ be a left noetherian ring and assume that $\mathscr{X}$ is a contravariantly finite subcategory of $\Lambda$-mod. An {\it $\mathscr{X}$-resolution} of a finitely generated left $\Lambda$-module $M$ is a complex  $$\cdots \rightarrow X_2 \rightarrow X_1 \rightarrow X_0 \rightarrow M \rightarrow 0$$ with each $X_i \in \mathscr{X}$ such that it is exact by applying the functor ${\rm Hom}_{\Lambda}(X,-)$ for each $X \in \mathscr{X}$ (see \cite{ej}). Note that if $\Lambda \in \mathscr{X}$, then each $\mathscr{X}$-resolution of a module $M$ in $\Lambda$-mod is an exact sequence. We recall from \cite[Section 8.4]{ej} that an {\it $\mathscr{X}$-resolution dimension} ${\rm res.dim}_{\mathscr{X}}M$ of a module $M$ in $\Lambda$-mod is defined to be the minimal integer $n \geq 0$ such that there is an $\mathscr{X}$-resolution $0 \rightarrow X_n \rightarrow \cdots \rightarrow X_0 \rightarrow M \rightarrow 0$. If there is no such an integer, we put ${\rm res.dim}_{\mathscr{X}}M=\infty$. The {\it global $\mathscr{X}$-resolution dimension} ${\rm res.dim}_{\mathscr{X}}\Lambda$-mod is defined to be the supremum of the $\mathscr{X}$-resolution dimensions of all finitely generated left $\Lambda$-modules.

\begin{Lem}\label{1.7}
Let $\Lambda$ be a left noetherian ring and $\mathscr{X}$ be a contravariantly finite resolving subcategory of $\Lambda$-mod. If ${\rm res.dim}_{\mathscr{X}}\Lambda$-mod $\leq 2$, then $\mathscr{X}$ is closed under kernel of morphisms.
\end{Lem}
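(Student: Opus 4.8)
The plan is to take an arbitrary morphism $f : X \to X'$ between objects of $\mathscr{X}$, form its kernel $K = \Ker f$ in $\Lambda$-mod, and show $K \in \mathscr{X}$. First I would factor $f$ through its image: writing $I = \im f$, we have a short exact sequence $0 \to K \to X \to I \to 0$ and a monomorphism $I \hookrightarrow X'$. The key point about $I$ is that it is a second syzygy with respect to $\mathscr{X}$: since $\operatorname{res.dim}_{\mathscr{X}}\Lambda\text{-mod} \le 2$, the cokernel $C = X'/I$ admits an $\mathscr{X}$-resolution $0 \to X_2 \to X_1 \to X_0 \to C \to 0$ with each $X_i \in \mathscr{X}$, and because $\Lambda \in \mathscr{X}$ (so $\mathscr{X}$-resolutions are honest exact sequences) this forces the kernel of $X_1 \to X_0$ — equivalently, after the standard syzygy comparison, the relevant second syzygy — to lie in $\mathscr{X}$. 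Comparing the exact sequence $0 \to I \to X' \to C \to 0$ with this resolution and using that $\mathscr{X}$ is closed under extensions and kernels of epimorphisms (the defining properties of a resolving subcategory), one concludes $I \in \mathscr{X}$.

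Granting $I \in \mathscr{X}$, I would then analyse $K$ via the sequence $0 \to K \to X \to I \to 0$. Here $X \in \mathscr{X}$ and $I \in \mathscr{X}$, so $K$ is the kernel of an epimorphism between objects of $\mathscr{X}$; since $\mathscr{X}$ is resolving it is by definition closed under kernels of epimorphisms, hence $K \in \mathscr{X}$. This is the mechanism that upgrades "closed under kernels of epimorphisms" to "closed under kernels of arbitrary morphisms": the bound on the resolution dimension is exactly what lets us replace an arbitrary $f$ by an epimorphism $X \twoheadrightarrow I$ with $I$ still inside $\mathscr{X}$.

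The main obstacle is the step showing $I = \im f \in \mathscr{X}$, i.e. that $\operatorname{res.dim}_{\mathscr{X}}\Lambda\text{-mod}\le 2$ really does force syzygy-of-syzygy modules into $\mathscr{X}$. The care needed is: given the $\mathscr{X}$-resolution $0 \to X_2 \to X_1 \xrightarrow{d} X_0 \to C \to 0$ of $C = \Coker(I \hookrightarrow X')$, one must compare it with the two-term "resolution" $0 \to I \to X' \to C \to 0$ coming from $X'\in\mathscr{X}$. A version of Schanuel's lemma (valid since $\Lambda\in\mathscr X$ makes these sequences exact) gives $I \oplus X_0 \cong \Omega_{\mathscr X}C \oplus X'$ where $\Omega_{\mathscr X}C = \Ker(X_0 \to C) = \im d$, and $\im d$ in turn sits in $0 \to X_2 \to X_1 \to \im d \to 0$ with $X_1, X_2 \in \mathscr{X}$, so $\im d$ is a kernel of an epimorphism between objects of $\mathscr{X}$, hence in $\mathscr{X}$; therefore $I \oplus X_0 \in \mathscr{X}$, and closure under direct summands yields $I \in \mathscr{X}$. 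Once this bookkeeping is done correctly the rest is immediate.
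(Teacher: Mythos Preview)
Your strategy---factor $f$ through its image $I$ and then invoke closure under kernels of epimorphisms---is the same as the paper's, but the step establishing $I \in \mathscr{X}$ contains a genuine error. You write that $\im d$ sits in $0 \to X_2 \to X_1 \to \im d \to 0$ and conclude that $\im d$ ``is a kernel of an epimorphism between objects of $\mathscr{X}$, hence in $\mathscr{X}$''; but in that sequence $\im d$ is the \emph{cokernel}, not a kernel, and resolving subcategories are not closed under such cokernels. In fact $\Omega_{\mathscr{X}}C = \im d$ is typically \emph{not} in $\mathscr{X}$ when ${\rm res.dim}_{\mathscr{X}} C = 2$, and consequently neither is $I$. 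Concretely: take $\mathscr{X} = {\rm proj}(\Lambda)$ for $\Lambda = kQ/(\alpha\beta)$ with $Q\colon 1 \xleftarrow{\alpha} 2 \xleftarrow{\beta} 3$; the map $f\colon P_2 \to P_3$ onto ${\rm rad}\,P_3 = S_2$ has ${\rm Im}\,f = S_2 \notin \mathscr{X}$ while ${\rm Ker}\,f = S_1 = P_1 \in \mathscr{X}$. (Your Schanuel isomorphism $I \oplus X_0 \cong \Omega_{\mathscr{X}}C \oplus X'$ is separately unjustified, since $X' \to C$ is not assumed to be a right $\mathscr{X}$-approximation; but even granting it, the argument still fails for the reason above.)

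The paper avoids this by doing casework on ${\rm res.dim}_{\mathscr{X}} C$ and invoking a comparison lemma for resolving subcategories (Zhu, Lemma~2.1): if $0 \to A \to Y \to M \to 0$ and $0 \to A' \to Y' \to M \to 0$ are exact with $Y, Y' \in \mathscr{X}$, then $A \in \mathscr{X}$ iff $A' \in \mathscr{X}$ (this follows from the pullback of $Y \to M \leftarrow Y'$ using only closure under extensions and kernels of epimorphisms---no Schanuel isomorphism is needed). When ${\rm res.dim}_{\mathscr{X}} C \le 1$, comparing $0 \to I \to X' \to C \to 0$ with a length-one $\mathscr{X}$-resolution of $C$ gives $I \in \mathscr{X}$ and hence $K \in \mathscr{X}$, as you argue. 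When ${\rm res.dim}_{\mathscr{X}} C = 2$, however, one does \emph{not} obtain $I \in \mathscr{X}$; the comparison yields only ${\rm res.dim}_{\mathscr{X}} I \le 1$, and one then reruns the previous step with $I$ in the role of $C$ and the sequence $0 \to K \to X \to I \to 0$ to conclude $K \in \mathscr{X}$ directly.
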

\begin{proof}
Consider the exact sequence $0 \rightarrow {\rm Ker}h \rightarrow X \overset{h}{\rightarrow} X' \rightarrow {\rm Coker}h \rightarrow 0$  with $X, X' \in \mathscr{X}$. Since ${\rm res.dim}_{\mathscr{X}}\Lambda${\rm -mod} $\leq 2$, ${\rm res.dim}_{\mathscr{X}}{\rm Coker}h \leq 2$. It is easy to see that ${\rm Ker}h \in \mathscr{X}$ when ${\rm res.dim}_{\mathscr{X}}{\rm Coker}h = 0$. Assume that ${\rm res.dim}_{\mathscr{X}}{\rm Coker}h = 1$. Then there is an exact sequence $0 \rightarrow Y_1 \rightarrow Y_0 \rightarrow {\rm Coker}h \rightarrow 0$ with each $Y_i \in \mathscr{X}$. Since the sequence $0 \rightarrow {\rm Im}h \rightarrow X' \rightarrow {\rm Coker}h \rightarrow 0$ is exact, ${\rm Im}h \in \Lambda$-mod and $\mathscr{X}$ is resolving, by \cite[Lemma 2.1]{zx}, ${\rm Im}h \in \mathscr{X}$ and hence ${\rm Ker}h \in \mathscr{X}$. By the similar argument, we can see that ${\rm Ker}h \in \mathscr{X}$ when ${\rm res.dim}_{\mathscr{X}}{\rm Coker}h =2$. Therefore the subcategory $\mathscr{X}$ of $\Lambda$-mod is closed under kernel morphisms.
\end{proof}

\subsection{Functor rings}

Let $\lbrace U_{\alpha}~|~ \alpha \in J \rbrace$ be a family of finitely generated left $\Lambda$-modules. Set $U=\bigoplus_{\alpha \in J} U_{\alpha}$ and for each $\alpha \in J$, letting $e_{\alpha}=\varepsilon_{\alpha}\circ \pi_{\alpha}$, where $\pi_{\alpha}:U \rightarrow U_{\alpha}$ is the canonical projection and $\varepsilon_{\alpha}:U_{\alpha} \rightarrow U$ is the canonical injection. For each left $\Lambda$-module $X$, we define as in \cite[Page 40]{fu}, $\widehat{{\Hom}}_{\Lambda}(U,X)=\lbrace f \in {\rm Hom}_{\Lambda}(U,X)~|~f \circ e_{\alpha} =0~ {\rm for ~almost~ all}~ \alpha \in J \rbrace$.  For $X=U$, we write $\widehat{{\Hom}}_{\Lambda}(U,U)=\widehat{{\rm End}}_{\Lambda}(U)$. Let $T$ be a ring (not necessary with unit). $T$ is called a {\it ring with enough idempotents} if there exists a family $\lbrace q_{\alpha}~|~\alpha \in I \rbrace$ of pairwise orthogonal idempotents of $T$ such that $T=\bigoplus_{\alpha \in I}Tq_{\alpha}=\bigoplus_{\alpha \in I}q_{\alpha}T$ (see \cite[Page 39]{fu}). $R=\widehat{{\rm End}}_{\Lambda}(U)$ is a ring with enough idempotents because of $R=\bigoplus_{\alpha \in J}Re_{\alpha}=\bigoplus_{\alpha \in J}e_{\alpha}R$. Fuller in \cite[Page 40]{fu} defined a covariant functor $\widehat{{\Hom}}_{\Lambda}(U,-):\Lambda$-Mod$\rightarrow R$Mod as follows. For any morphism $f: X \rightarrow Y$ in $\Lambda$-Mod, he defined $\widehat{{\rm Hom}}_{\Lambda}(U,f): \widehat{{\rm Hom}}_{\Lambda}(U,X) \rightarrow \widehat{{\rm Hom}}_{\Lambda}(U,Y)$ via $g \mapsto f \circ g$. From \cite[Pages 40-41]{fu} we observe that the covariant functor $\widehat{{\Hom}}_{\Lambda}(U,-)$ is a left exact functor and preserves direct sums. Moreover $\widehat{{\Hom}}_{\Lambda}(U,-)$ induces an additive equivalence between the full subcategory ${\rm Add}(U)$ of $\Lambda$-Mod and the full subcategory Proj$(R)$ of $R$Mod with the inverse equivalence $U \otimes_R -$. Note that Harada \cite{ha1, ha2} has pointed out that over a ring $T$ with enough idempotents ordinary  direct sums and tensor products behave as they do over a ring with unit. \\

Two rings  with enough idempotents $T$ and $S$ are said to be {\it Morita equivalent} in case there exists an additive equivalence between $R$Mod and $S$Mod (see \cite[Sect. 3]{a}). As in \cite[Page 40]{fu}, we recall that a ring with enough idempotents $R$ is called {\it the functor ring of $\mathscr{X}$} if $R\cong \widehat{{\rm End}}_{\Lambda}(\bigoplus _{\alpha \in J} U_{\alpha})$, where  $\lbrace U_{\alpha}~|~ \alpha \in J \rbrace$ is a complete set of representative of the isomorphic classes of  modules in $\mathscr{X}$.

\begin{Lem}\label{1.5}
Let $\Lambda$ be a left artinian ring and $\lbrace V_{\alpha}~|~ \alpha \in I \rbrace$ be a complete set of representative of the isomorphic classes of indecomposable modules in $\mathscr{X}$. Then $T=\widehat{{\rm End}}_{\Lambda}(V)$ is Morita equivalent to the functor ring $R$ of $\mathscr{X}$, where $V=\bigoplus_{\alpha \in I} V_{\alpha}$.
\end{Lem}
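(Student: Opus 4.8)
The plan is to exhibit an explicit Morita equivalence between $T = \widehat{\rm End}_{\Lambda}(V)$ and the functor ring $R = \widehat{\rm End}_{\Lambda}(U)$, where $V = \bigoplus_{\alpha \in I} V_{\alpha}$ is the direct sum of representatives of the indecomposable objects of $\mathscr{X}$ and $U = \bigoplus_{\beta \in J} U_{\beta}$ is the direct sum of representatives of \emph{all} isomorphism classes of objects of $\mathscr{X}$. Both are rings with enough idempotents, and both come equipped with additive equivalences $\widehat{\rm Hom}_{\Lambda}(V,-)\colon {\rm Add}(V) \xrightarrow{\sim} {\rm Proj}(T)$ and $\widehat{\rm Hom}_{\Lambda}(U,-)\colon {\rm Add}(U) \xrightarrow{\sim} {\rm Proj}(R)$ recalled from \cite{fu}. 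So it suffices to show that the two functor-category-free data $({\rm Add}(V), {\rm Proj}(T))$ and $({\rm Add}(U), {\rm Proj}(R))$ are ``the same'', which reduces to showing ${\rm Add}(V) = {\rm Add}(U)$ as subcategories of $\Lambda$-Mod.

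The key step is therefore: since $\Lambda$ is left artinian, every object of $\mathscr{X} \subseteq \Lambda$-mod is a finitely generated module over a left artinian ring, hence by the Krull--Schmidt theorem it decomposes into a finite direct sum of indecomposable modules, each of which lies in $\mathscr{X}$ because $\mathscr{X}$ is closed under direct summands. Consequently every $U_{\beta}$ is a finite direct sum of copies of modules from the family $\{V_{\alpha}\}$, so $U_{\beta} \in {\rm Add}(V)$ for all $\beta$, whence ${\rm Add}(U) \subseteq {\rm Add}(V)$; conversely each $V_{\alpha}$ is (isomorphic to) one of the $U_{\beta}$, so ${\rm Add}(V) \subseteq {\rm Add}(U)$. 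Thus ${\rm Add}(V) = {\rm Add}(U)$ inside $\Lambda$-Mod. Composing the equivalences, ${\rm Proj}(T) \simeq {\rm Add}(V) = {\rm Add}(U) \simeq {\rm Proj}(R)$ as additive categories.

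Finally I would promote this to a Morita equivalence of the functor rings themselves. A ring with enough idempotents $S$ is recovered (up to Morita equivalence) from the additive category ${\rm Proj}(S)$ together with its full subcategory ${\rm proj}(S)$ of finitely generated projectives, and in fact $S$Mod is equivalent to the category of (unitary) additive functors on ${\rm proj}(S)^{\rm op}$; more directly, an additive equivalence ${\rm Proj}(T) \simeq {\rm Proj}(R)$ restricts to an equivalence on finitely generated projectives and induces an equivalence $T{\rm Mod} \simeq R{\rm Mod}$ by the standard argument that every unitary module is a cokernel of a map between coproducts of principal projectives $Tq_{\alpha}$, resp. $Re_{\beta}$. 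Hence $T$ and $R$ are Morita equivalent in the sense of \cite{a}. The main obstacle to watch is purely bookkeeping with the ``enough idempotents'' formalism: one must check that the equivalence ${\rm Add}(V) \simeq {\rm Add}(U)$ matches up the distinguished families of orthogonal idempotents (the $e_{\alpha}$'s in $T$ and $U$'s counterparts in $R$) closely enough to invoke Harada's observation \cite{ha1, ha2} that direct sums and tensor products behave well, so that the passage from projectives to all unitary modules is legitimate; this is routine but is the only place where the non-unital setting requires care rather than a verbatim repetition of the classical Morita theory.
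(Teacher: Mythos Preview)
Your proposal is correct and follows essentially the same approach as the paper: both use Krull--Schmidt over the left artinian ring $\Lambda$ together with closure of $\mathscr{X}$ under summands to identify the categories of (finitely generated) projectives over $T$ and $R$, and then upgrade this to a Morita equivalence. The only cosmetic difference is that the paper works directly with ${\rm proj}(T)\simeq{\rm proj}(R)$ and then invokes \cite[Theorem~3.10]{zn} to conclude, whereas you pass through ${\rm Add}(V)={\rm Add}(U)$ and sketch the standard passage from an equivalence of projectives to an equivalence of unitary module categories.
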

\begin{proof}
Since every finitely generated left $\Lambda$-module is a finite direct sum of indecomposable modules and $\mathscr{X}$ is closed under direct summands, there is an additive equivalence between the full subcategory ${\rm proj}(T)$ of $T$Mod and the full subcategory ${\rm proj}(R)$ of $R$Mod. Hence by \cite[Theorem 3.10]{zn}, $T$ is Morita equivalent to $R$.
\end{proof}

Let $R$ be a ring with enough idempotents. A unitary left $R$-module $U$ is called {\it generator} in $R$Mod if for every pair of distinct morphisms $f,g : M \rightarrow B$ in $R$Mod there exists a morphism $h : U \rightarrow M$ such that $f\circ h \neq g \circ h$ (see \cite[Ch. V, Sect. 7]{s}). $R$ is called {\it left {\rm (resp.,} right{\rm )} locally noetherian} if every finitely generated unitary left (resp., right) $R$-module is noetherian (see \cite[Page 141]{wis}).  It is easy to checked that submodule lattices are ``preserved" by Morita equivalence. This implies that locally noetherian is a Morita invariant property. The ring $R$ has {\it left global dimension less than or equal to 2} which denoted by ${\rm l.gl.dim}R \leq 2$, if the kernel of any homomorphism between projective unitary left $R$-modules is a projective left $R$-module (see \cite[Ch. 7, Sect. 1]{mc}). Note that global dimension property is a Morita invariant property.

\begin{Lem}\label{1.6}
Let $\Lambda$ be a left artinian ring and $\lbrace V_{\alpha}~|~ \alpha \in J \rbrace$ be a family of finitely generated left $\Lambda$-modules. Set $V=\bigoplus_{\alpha \in J} V_{\alpha}$ and $T=\widehat{{\rm End}}_{\Lambda}(V)$. If $V$ is a generator in $\Lambda${\rm -Mod} and ${\rm l.gl.dim}T \leq 2$, then $T$ is a left locally noetherian ring.
\end{Lem}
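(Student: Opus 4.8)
The plan is to reduce the statement to a known fact about functor rings by exploiting the hypothesis that $V$ is a generator in $\Lambda$-Mod. First I would recall the key consequence of the generator hypothesis: since $V=\bigoplus_{\alpha\in J}V_\alpha$ is a generator in $\Lambda$-Mod, every finitely generated left $\Lambda$-module is a quotient of a finite direct sum of copies of the $V_\alpha$, so the functor $\widehat{\Hom}_\Lambda(V,-)$ is faithful and, more importantly, $\Lambda$ itself lies in $\mathrm{Add}(V)$. Hence the restriction $\widehat{\Hom}_\Lambda(V,-)|_{\mathrm{Add}(V)}$ is an additive equivalence onto $\Proj(T)$ (as recalled in the paragraph following Fuller's construction), and under this equivalence $\widehat{\Hom}_\Lambda(V,\Lambda)$ is a finitely generated projective generator of $T$Mod. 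In particular the category $T$Mod is equivalent to a full subcategory of $\Lambda$-Mod-modules, but the cleaner route is: $T$Mod, being the module category over the functor ring of $\mathrm{Add}(V)$, is a Grothendieck category whose finitely presented objects correspond to cokernels of maps in $\mathrm{Add}(V)$, i.e.\ to certain $\Lambda$-modules.

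Next I would translate ``left locally noetherian'' into a Noetherian condition one can verify. A ring with enough idempotents $T$ is left locally noetherian precisely when each $Tq$ (for $q$ in the distinguished family of idempotents) is a Noetherian left $T$-module, equivalently when every ascending chain of finitely generated submodules of a finitely generated projective left $T$-module stabilises. Using the equivalence $\widehat{\Hom}_\Lambda(V,-)\colon \mathrm{Add}(V)\xrightarrow{\ \sim\ }\Proj(T)$ together with its inverse $V\otimes_T-$, a finitely generated left $T$-module is $\widehat{\Hom}_\Lambda(V,M)$ modulo a finitely generated submodule, and the hypothesis $\mathrm{l.gl.dim}\,T\le 2$ says exactly that the second syzygy of such a module is projective. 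Concretely: given a finitely generated left $T$-module $N$, choose a presentation $\widehat{\Hom}_\Lambda(V,P_1)\to\widehat{\Hom}_\Lambda(V,P_0)\to N\to 0$ with $P_0,P_1\in\mathrm{Add}(V)$ finitely generated, and then a map $\widehat{\Hom}_\Lambda(V,P_2)\to\widehat{\Hom}_\Lambda(V,P_1)$ onto the first syzygy; by $\mathrm{l.gl.dim}\,T\le 2$ the kernel of the latter is projective, so $N$ admits a length-two projective resolution by finitely generated projectives. This shows every finitely generated left $T$-module is finitely presented with finitely presented syzygies, and I would then invoke the standard fact (as in the functor-ring literature, e.g.\ \cite{fu}, \cite{wis}) that a ring with enough idempotents and $\mathrm{l.gl.dim}\le 2$ over which every finitely generated module is finitely presented is left locally noetherian — the point being that for such rings ``coherent'' plus ``every f.g.\ module f.p.'' upgrades to ``Noetherian'' because the ascending chain condition on f.g.\ submodules of a f.g.\ projective reduces to a chain condition on the corresponding $\Lambda$-modules, which holds since $\Lambda$ is left artinian and the relevant modules are finitely generated over $\Lambda$.

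The main obstacle I anticipate is the last step: making precise why $\mathrm{l.gl.dim}\,T\le 2$ together with the coherence coming from $\Lambda$ left artinian forces the Noetherian condition on $T$, rather than merely coherence. The clean way to handle this is to observe that under the equivalence, finitely presented left $T$-modules correspond bijectively to finitely presented functors, hence (since $\Lambda$ is artinian, so $\Lambda$-mod is a length category and $\mathrm{Add}(V)$ is built from the finitely generated indecomposables) to finitely generated $\Lambda$-modules equipped with the relevant structure; an ascending chain of finitely generated $T$-submodules of $\widehat{\Hom}_\Lambda(V,M)$ then gives, via $V\otimes_T-$, an ascending chain of $\Lambda$-submodules of a finitely generated $\Lambda$-module $M$ (here is where the generator hypothesis and the condition $\mathrm{l.gl.dim}\,T\le 2$ are both used, the latter to guarantee that each member of the chain, being a second syzygy, is again of the form $\widehat{\Hom}_\Lambda(V,-)$ of a finitely generated $\Lambda$-module so that $V\otimes_T-$ behaves well), and this chain stabilises because $M$ is a Noetherian $\Lambda$-module. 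Tracing this back up yields the stabilisation of the original $T$-chain, so $T$ is left locally noetherian.
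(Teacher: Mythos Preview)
Your plan has a genuine gap at the ``trace back'' step. A submodule $N$ of $Te_\alpha=\widehat{\Hom}_\Lambda(V,V_\alpha)$ is a \emph{first} syzygy (of $Te_\alpha/N$), not a second one; the hypothesis $\mathrm{l.gl.dim}\,T\le 2$ makes second syzygies projective but says only that first syzygies have projective dimension at most~$1$. It is true that second syzygies lie in the essential image of $\widehat{\Hom}_\Lambda(V,-)$ (they are kernels of maps between projectives, and $\widehat{\Hom}_\Lambda(V,-)$ preserves kernels), but first syzygies are cokernels of such maps, and since $\widehat{\Hom}_\Lambda(V,-)$ is only left exact there is no reason the cokernel should again lie in that image. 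Without the identification $N_i=\widehat{\Hom}_\Lambda(V,K_i)$ you cannot conclude from $V\otimes_T N_i=V\otimes_T N_{i+1}$ that $N_i=N_{i+1}$: for that you would need $V\otimes_T-$ to be faithful on $T$Mod, equivalently $V_T$ to be a generator in Mod-$T$, and the generator hypothesis on ${}_\Lambda V$ only yields that $V_T$ is finitely generated projective, not that it generates. (Your earlier claim that $\widehat{\Hom}_\Lambda(V,\Lambda)$ is a generator of $T$Mod has the same defect: an epimorphism $\Lambda^n\to V_\alpha$ need not stay epic after applying $\widehat{\Hom}_\Lambda(V,-)$.)

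The paper argues differently: it shows directly that every submodule $X$ of $Te_\alpha$ is finitely generated. From $\mathrm{l.gl.dim}\,T\le 2$ one obtains a projective resolution $0\to\widehat{\Hom}_\Lambda(V,Q)\to\widehat{\Hom}_\Lambda(V,P)\to X\to 0$ with $Q,P\in\mathrm{Add}(V)$, hence an exact sequence $0\to Q\to P\to V_\alpha$ of $\Lambda$-modules with $P/\mathrm{Im}\,g$ finitely generated; a decomposition lemma (\cite[Proposition~53.2]{wi}) then allows one to split off almost all indecomposable summands of $Q$ inside $P\oplus Q'$ and produce a finitely generated $\Lambda$-module $L$ with $\widehat{\Hom}_\Lambda(V,L)\twoheadrightarrow X$. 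The key move is constructing a finitely generated \emph{cover} of $X$, rather than trying to identify $X$ itself with something in the image of $\widehat{\Hom}_\Lambda(V,-)$.
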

\begin{proof}
It is enough to show that every $T$-module $\widehat{{\rm Hom}}_{\Lambda}(V,V_{\alpha})$ is noetherian. Let $X$ be a $T$-submodule of $\widehat{{\rm Hom}}_{\Lambda}(V,V_{\alpha})$. Since $V$ is a generator in $\Lambda$-Mod,  $V$ is a finitely generated projective unitary right $T$-module. Since ${\rm l.gl.dim}T \leq 2$ and projective modules in $T$Mod are of the form  $\widehat{{\rm Hom}}_{\Lambda}(V,Q)$ for some $Q \in {\rm Add}(V)$, we have exact sequences $0 \rightarrow Q \overset{g}{\rightarrow} P \rightarrow V_{\alpha}$ and $$0 \rightarrow \widehat{{\Hom}}_{\Lambda}(V,Q) \overset{\widehat{{\Hom}}_{\Lambda}(V,g)}{\rightarrow} \widehat{{\Hom}}_{\Lambda}(V,P) \rightarrow X \rightarrow 0,$$ where $Q, P \in {\rm Add}(V)$. Hence $P/{\rm Im}g$ is a finitely generated left $\Lambda$-module. Since $Q \in {\rm Add}(V)$,  $Q \oplus Q' \cong \bigoplus_{\beta \in B}V_{\beta}$ for some $\Lambda$-module $Q'$. Consider the exact sequence $$0 \rightarrow Q \oplus Q' \overset{(g,id_{Q'})}{\rightarrow} P \oplus Q' \rightarrow P \oplus Q'/{\rm Im}(g,id_{Q'}) \rightarrow 0.$$ It is easy to check that ${\rm Im}g \oplus Q'= \bigoplus_{\beta \in B}W_{\beta}$ with $W_{\beta} \cong V_{\beta}$. By \cite[Proposition 53.2]{wi}, there exists a finitely generated submodule $K$ of $P \oplus Q'$ and a finite subset $B'$ of $B$ such that $P \oplus Q'=(\bigoplus_{\beta \in B\setminus {B'}} W_{\beta}) \oplus L$, where $L=K + \sum_{\beta \in B'}W_{\beta}$. Therefore we have a commutative diagram
\begin{displaymath}
\xymatrix{
0 \ar[r] & \widehat{{\Hom}}_{\Lambda}(V,\bigoplus_{\beta \in B\setminus {B'}} W_{\beta}) \ar[r] \ar[d]_{\widehat{{\Hom}}_{\Lambda}(V, \pi_Q \circ (g^{-1} \oplus id_{Q'}) \circ \ell^{'})} &
\widehat{{\Hom}}_{\Lambda}(V,P \oplus Q') \ar[r]  \ar[d]_{{\widehat{{\Hom}}_{\Lambda}(V,\pi_P)}} & \widehat{{\Hom}}_{\Lambda}(V,L)  \ar [r] & 0\\
0 \ar [r] & \widehat{{\Hom}}_{\Lambda}(V,Q) \ar[r]^{\widehat{{\Hom}}_{\Lambda}(V,g)} & \widehat{{\Hom}}_{\Lambda}(V,P) \ar[r] & X \ar[r]& 0}
\end{displaymath}
where $\ell^{'}: \bigoplus_{\beta \in B\setminus {B'}} W_{\beta} \hookrightarrow {\rm Im}g \oplus Q'$ is the canonical inclusion, $\pi_{Q}: Q \oplus Q' \rightarrow Q$ and $\pi_{P}: P \oplus Q' \rightarrow P$ are the canonical projections. So there is a $T$-module epimorphism $\gamma: \widehat{{\Hom}}_{\Lambda}(V,L) \rightarrow X$. Since $\widehat{{\Hom}}_{\Lambda}(V,L)$ is finitely generated, $X$ is finitely generated and hence $T$ is a left locally noetherian ring.
\end{proof}

 \section{Functor rings of contravariantly finite resolving subcategories}

In this section, we give a characterization of contravariantly finite resolving subcategories $\mathscr{X}$ of $\Lambda$-mod of
finite representation type with ${\rm res.dim}_{\mathscr{X}}\Lambda${\rm -mod} $\leq 2$ in terms of their functor rings. Also we give a characterization of contravariantly finite resolving subcategories $\mathscr{X}$ of $\Lambda$-mod of finite representation type which contain $J(\Lambda)$ in terms of Mod$(\mathscr{X})$. We extend and unify the classical results of Auslander \cite[Theorem 3.1]{ausla2} and Wisbauer \cite[Theorem 3.1]{wis}.\\

Let $\mathscr{A}$ be an additive category with direct limits. An object $A$ in $\mathscr{A}$ is called {\it finitely presented} if the representable functor $\mathscr{A}(A,-)={\rm Hom}_{\mathscr{A}}(A,-): \mathscr{A} \rightarrow \mathfrak{Ab}$ preserves direct limits (see \cite[Page 1642]{wc}). We denote by ${\rm fp}(\mathscr{A})$ the full subcategory of finitely presented objects in $\mathscr{A}$. We recall from \cite{wc} that $\mathscr{A}$ is called {\it locally finitely presented} if ${\rm fp}(\mathscr{A})$ is skeletally small and $\mathscr{A}={\underrightarrow{\lim}}~{\rm fp}(\mathscr{A})$ (i.e., every object in $\mathscr{A}$ is a direct limit of finitely presented objects in $\mathscr{A}$).\\

Let $R$ be a ring with enough idempotents. We recall that $R$ is called {\it left {\rm (resp.,} right{\rm )} locally artinian} if every finitely generated unitary left (resp., right) $R$-module is artinian (see \cite[Page 141]{wis}). $R$ is called  {\it left {\rm (resp.,} right{\rm )} locally finite} if it is both left {\rm (resp.,} right{\rm )} locally noetherian and artinian ring (see \cite[Page 141]{wis}). Moreover $R$ is called {\rm semiperfect} if every finitely generated unitary left $R$-module has a projective cover (see \cite{f}). Also $R$ is called {\it left {\rm (resp.,} right{\rm )}} perfect if $R$ is semiperfect and $J(R)$ is right (resp., left) t-nilpotent (see \cite{fu}). It is easy to see that left perfect property is a Morita invariant property.\\

Let $\Lambda$ be a left artinian ring. The subcategory $\mathscr{X}$ of $\Lambda$-mod is called {\it of bounded representation type} if there is a finite upper bound for the lengths of the indecomposable modules in $\mathscr{X}$ (see \cite[Sect. 4]{wis}).

\begin{The}\label{2.1}
Assume that $\Lambda$ is a left artinian ring and $\mathscr{X}$ is a contravariantly finite resolving subcategory of $\Lambda${\rm -mod}. Let $\lbrace V_{\alpha}~|~\alpha \in J\rbrace$ be a complete set of representative of the isomorphic classes of indecomposable modules in $\mathscr{X}$ and $T=\widehat{{\rm End}}_{\Lambda}(V)$, where $V=\bigoplus_{\alpha \in J}V_{\alpha}$.
\begin{itemize}
\item[$(1)$] The following statements are equivalent:
\begin{itemize}
\item[$({\rm i})$] $\mathscr{X}$ is of finite representation type and ${\rm res.dim}_{\mathscr{X}}\Lambda${\rm -mod} $\leq 2$.
\item[$({\rm ii})$] $\mathscr{X}$ is of bounded representation type and ${\rm res.dim}_{\mathscr{X}}\Lambda${\rm -mod} $\leq 2$.
\item[$({\rm iii})$] $T$ is a right perfect ring and ${\rm l.gl.dim}T \leq 2$.
\item[$({\rm iv})$] $T$ is a left locally finite ring and ${\rm l.gl.dim}T \leq 2$.
\end{itemize}
\item[$(2)$] The following statements are equivalent:
\begin{itemize}
\item[$({\rm i})$]  $T$ is a right locally noetherian ring and every module in $\mathscr{A}={\underrightarrow{\lim}}\mathscr{X}$ is a direct sum of finitely generated indecomposable modules.
\item[$({\rm ii})$] $T$ is a right locally finite ring.
\item[$({\rm iii})$] $V_T$ has finite length.
\end{itemize}
\item[$(3)$] If $T$ is a right locally finite ring and ${\rm l.gl.dim}T \leq 2$, then $\mathscr{X}$ is of finite representation type and ${\rm res.dim}_{\mathscr{X}}\Lambda${\rm -mod} $\leq 2$. Also the converse holds when ${\rm res.dim}_{\mathscr{X}}\Lambda${\rm -mod} $= 0$.
\end{itemize}
\end{The}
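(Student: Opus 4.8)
The plan is to translate every statement into one about the functor ring $T$ and the functor category ${\rm Mod}(\mathscr{X})$, using the additive equivalence $\widehat{{\rm Hom}}_{\Lambda}(V,-): {\rm Add}(V)\to{\rm Proj}(T)$ with quasi-inverse $V\otimes_T-$, Lemma \ref{1.5} (so that statements about the full functor ring $R$ of $\mathscr{X}$ transfer to $T$ and back, right/left local noetherianity, local finiteness, being perfect and ${\rm l.gl.dim}\leq 2$ all being Morita invariant), Lemma \ref{1.6}, Lemma \ref{1.7}, and the classical theorems of Auslander \cite[Theorem 3.1]{ausla2} and Wisbauer \cite[Theorem 3.1]{wis} that we are relativising. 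First I would fix the standing reductions. Since $\mathscr{X}$ is resolving we have $\Lambda\in\mathscr{X}$, so every right $\mathscr{X}$-approximation is an epimorphism and every $\mathscr{X}$-resolution is an exact sequence; since $\Lambda$ is left artinian every object of $\mathscr{X}$ is a finite direct sum of the $V_{\alpha}$, so $\mathscr{X}\subseteq{\rm add}(V)\subseteq{\rm Add}(V)$, the representable functors $\mathscr{X}(-,V_{\alpha})$ are projective generators of the locally finitely presented Grothendieck category ${\rm Mod}(\mathscr{X})$, and ${\rm Mod}(\mathscr{X})$ is equivalent to the category of unitary $T$-modules. Under this equivalence the $\mathscr{X}$-resolutions of $M\in\Lambda$-mod correspond to projective resolutions of ${\rm Hom}_{\Lambda}(-,M)|_{\mathscr{X}}$, so ${\rm res.dim}_{\mathscr{X}}M$ is the projective dimension of ${\rm Hom}_{\Lambda}(-,M)|_{\mathscr{X}}$ and ${\rm res.dim}_{\mathscr{X}}\Lambda$-mod $\leq 2$ becomes ${\rm l.gl.dim}T\leq 2$.

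For part $(1)$ I would prove $({\rm i})\Rightarrow({\rm iv})\Rightarrow({\rm iii})\Rightarrow({\rm ii})\Rightarrow({\rm i})$. Assuming $({\rm i})$, finiteness of $J$ makes $V$ a finitely generated $\Lambda$-module, so $T={\rm End}_{\Lambda}(V)$ is an ordinary ring with identity which, over the left artinian ring $\Lambda$, is semiprimary; a semiprimary ring is right perfect and left locally finite. For ${\rm l.gl.dim}T\leq 2$: by Lemma \ref{1.7}, ${\rm res.dim}_{\mathscr{X}}\Lambda$-mod $\leq 2$ makes $\mathscr{X}$ closed under kernels of morphisms, and any homomorphism of projective left $T$-modules is, up to direct sums and filtered colimits, of the form $\widehat{{\rm Hom}}_{\Lambda}(V,g)$ for some $g: Q_1\to Q_0$ in ${\rm add}(V)$; its kernel is $\widehat{{\rm Hom}}_{\Lambda}(V,{\rm Ker}\,g)$ with ${\rm Ker}\,g\in\mathscr{X}\subseteq{\rm Add}(V)$, hence projective. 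This yields $({\rm iv})$, and $({\rm iv})\Rightarrow({\rm iii})$ is the (Morita-invariant) fact for functor rings that left local finiteness together with finite global dimension forces right perfectness \cite{fu, wis}. For $({\rm iii})\Rightarrow({\rm ii})$ I would argue contrapositively: if the lengths $\ell_{\Lambda}(V_{\alpha})$ were unbounded, the Harada--Sai lemma would produce arbitrarily long chains of non-isomorphisms between indecomposable objects of $\mathscr{X}$ with nonzero composite, so $J(T)$ could not be left $t$-nilpotent, contradicting right perfectness. Finally $({\rm ii})\Rightarrow({\rm i})$: under bounded representation type the radical layers of $T$ have bounded length, so $T$ is perfect, and the relativised Wisbauer argument \cite[Theorem 3.1]{wis} (using the resolving structure and ${\rm res.dim}_{\mathscr{X}}\leq 2$) gives finiteness of $\mathscr{X}$.

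For part $(2)$, the point is that $\mathscr{A}={\underrightarrow{\lim}}\,\mathscr{X}$ is equivalent to the category of flat $\mathscr{X}$-modules (equivalently flat unitary $T$-, or $R$-, modules), so every object of $\mathscr{A}$ is a direct sum of finitely generated indecomposables exactly when $T$ is right locally noetherian and its flat modules decompose in that way; under the standing hypotheses this collapses to $T$ being right locally finite and to $\ell(V_T)<\infty$, by the duality between finiteness of finitely generated left and right $T$-modules combined with the artinian hypothesis on $\Lambda$, exactly as in the relativised forms of \cite[Theorem 3.1]{wis} and \cite[Theorem 3.1]{ausla2}. For part $(3)$: if $T$ is right locally finite with ${\rm l.gl.dim}T\leq 2$, then part $(2)$ gives that $T$ is right locally noetherian and $\mathscr{A}$ decomposes, and (finite global dimension again) that $T$ is right perfect, so part $(1)$ applies and $\mathscr{X}$ is of finite representation type, while ${\rm l.gl.dim}T\leq 2$ gives ${\rm res.dim}_{\mathscr{X}}\Lambda$-mod $\leq 2$ by the dictionary above. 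For the converse when ${\rm res.dim}_{\mathscr{X}}\Lambda$-mod $=0$: this forces $\mathscr{X}=\Lambda$-mod, so ``$\mathscr{X}$ of finite representation type'' is just ``$\Lambda$ of finite representation type'' and ``$T$ right locally finite with ${\rm l.gl.dim}T\leq 2$'' is Auslander's theorem \cite[Theorem 3.1]{ausla2} (valid for left artinian $\Lambda$ because $T$ is then semiprimary).

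The step I expect to be the main obstacle is the clean identification ${\rm res.dim}_{\mathscr{X}}\Lambda$-mod $\leq 2\Longleftrightarrow{\rm l.gl.dim}T\leq 2$ (with ``$\mathscr{X}$ closed under kernels of morphisms'') for the possibly non-unital functor ring: Lemma \ref{1.7} only supplies one direction, so one must start from an arbitrary projective presentation and use left local noetherianity of $T$ (Lemma \ref{1.6}, available since $\Lambda\in\mathscr{X}$ makes $V$ a generator) to reduce the relevant kernels to the finitely generated case before invoking the equivalence ${\rm Add}(V)\simeq{\rm Proj}(T)$. The other delicate points --- the Harada--Sai input that turns perfectness of $T$ into a uniform length bound, and the flat-module description of $\mathscr{A}$ that turns right local noetherianity of $T$ into the direct-sum decomposition in parts $(2)$ and $(3)$ --- are standard but require carefully distinguishing the ``big'' functor ring with infinitely many idempotents from its Morita-equivalent reduction $T$.
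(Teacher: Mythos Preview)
Your overall strategy---pass to $T$ and ${\rm Mod}(\mathscr{X})$ via Lemmas \ref{1.5}--\ref{1.7}---is exactly the paper's, but for Part~(1) you run the cycle in the \emph{opposite} direction from the paper, and two of the reversed implications are not adequately justified.

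The serious gap is $({\rm iii})\Rightarrow({\rm ii})$. You argue contrapositively: ``if the lengths $\ell_\Lambda(V_\alpha)$ were unbounded, the Harada--Sai lemma would produce arbitrarily long chains of non-isomorphisms with nonzero composite.'' This is a misreading of Harada--Sai. That lemma says: \emph{if} there is a bound $b$ on the lengths, \emph{then} any chain of $2^b-1$ non-isomorphisms between indecomposables composes to zero. Its contrapositive produces a long indecomposable from a long nonzero chain, not the other way around. Unbounded lengths give you large $V_\alpha$'s, but do not by themselves manufacture radical maps between them, so you cannot conclude that $J(T)$ fails to be left $t$-nilpotent. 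Similarly, your $({\rm iv})\Rightarrow({\rm iii})$ (``left local finiteness together with finite global dimension forces right perfectness'') is asserted by citation to \cite{fu,wis}, but no such statement appears there; left local finiteness only gives $J(T)^{n_\alpha}e_\alpha=0$ for each $\alpha$ separately, not left $t$-nilpotence of $J(T)$.

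The paper avoids both problems by running the cycle $({\rm i})\Rightarrow({\rm ii})\Rightarrow({\rm iii})\Rightarrow({\rm iv})\Rightarrow({\rm i})$. The step $({\rm ii})\Rightarrow({\rm iii})$ uses \cite[Proposition 54.1]{wi}: bounded representation type makes $T$ \emph{semiprimary} (so $J(T)$ is genuinely nilpotent), hence both left and right perfect; then ${\rm l.gl.dim}\,T\le2$ is obtained from Lemma~\ref{1.7} after reducing an arbitrary map of projective $T$-modules to the finitely generated case via a direct-limit argument (left perfectness is used to pass from flat to projective). The step $({\rm iii})\Rightarrow({\rm iv})$ uses Lemma~\ref{1.6} plus the DCC on finitely generated left ideals coming from right perfectness. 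The crucial finiteness step is $({\rm iv})\Rightarrow({\rm i})$: since $\Lambda$ has only finitely many simples $S_1,\dots,S_n$ and each indecomposable $X\in\mathscr{X}$ surjects onto some $S_j$, the simple top of $\widehat{{\rm Hom}}_\Lambda(V,X)$ occurs as a composition factor of $\widehat{{\rm Hom}}_\Lambda(V,S_j)$, and the latter has finite length by left local finiteness---so there are only finitely many simple $T$-modules, hence only finitely many $V_\alpha$. This counting argument is the substantive content you are missing; your ``relativised Wisbauer argument'' for $({\rm ii})\Rightarrow({\rm i})$ would in effect have to reproduce it.

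A smaller point: your ``dictionary'' equivalence ${\rm res.dim}_{\mathscr{X}}\Lambda\text{-mod}\le2 \Longleftrightarrow {\rm l.gl.dim}\,T\le2$ is not immediate for a non-unital $T$, and the paper does not treat it as such. Each direction is proved inside the relevant implication using an extra hypothesis available at that point (left perfectness in $({\rm ii})\Rightarrow({\rm iii})$, left artinianity of $T$ in $({\rm iv})\Rightarrow({\rm i})$). You do flag this as the ``main obstacle,'' which is correct, but Lemma~\ref{1.6} alone is not enough to close it; you need one of the perfectness/finiteness conditions already in hand.
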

\begin{proof}
 By \cite[Proposition 52.5]{wi}, there is an equivalence $\textbf{F}: R{\rm Mod} \longrightarrow {\rm Mod}(\mathscr{X})$ which preserves direct sums, projective modules, finitely generated modules, exact sequences and flat modules, where $R$ is the functor ring of $\mathscr{X}$. \\
$(1)~({\rm i}) \Rightarrow ({\rm ii})$  is clear.\\
$({\rm ii}) \Rightarrow ({\rm iii})$.  Because $\mathscr{X}$ is of bounded representation type, by \cite[Propositions 54.1]{wi},  $T$ is a semiprimary ring, that is,  $T/J(T)$ is semisimple and $J(T)$ is nilpotent. This implies that $T$ is a left and right perfect ring. Now we show that ${\rm l.gl.dim}T \leq 2$. To do this we need to show that if $g: Q \rightarrow Q'$ is a $T$-module homomorphism with $Q, Q' \in {\rm Proj}(T)$, then ${\rm Ker}g \in {\rm Proj}(T)$. It is easy to checked that $Q=\bigoplus_{i\in A}Q_i$ and $Q'=\bigoplus_{j\in B}Q'_j$, where $A$ and $B$ are two sets and $Q_i$ and $Q'_j$ are finitely generated indecomposable projective unitary left $T$-modules, for each $i \in A, j \in B$. Assume that $E$ is a set of all finite subsets of $A$. It is not difficult to see that $(E, \subseteq)$ is a quasi-ordered directed set and $(Q_L, \ell_{LK})_{L,K \in E}$ is a direct system of modules, where each $Q_L=\bigoplus_{i \in L}Q_i$ and for each $L \subseteq K$, a morphism $\ell_{LK}: Q_L \rightarrow Q_K$ is the canonical injection. Moreover $Q$ together with the canonical injections $\ell_L: Q_L \rightarrow Q$ form a direct limit of $(Q_L, \ell_{LK})_{L,K \in E}$. Put $g_L:=g \circ \ell_L$ for each $L \in E$. Then $({\rm Ker}g_L, \ell_{LK})_{L,K \in E}$ is a direct system of modules and there is a monomorphism $\gamma: {\underrightarrow{\lim}}{\rm Ker}g_L \rightarrow Q$ such that the following diagram is commutative
\begin{displaymath}
\xymatrix{
{\rm Ker}g_L \ar@{->}[dd] \ar@/_/[dr]  \ar@{->}[rr]^{\ell_{LK}} && {\rm Ker}g_K \ar@{->}[dd] \ar@/^/[dl]  \\
&  {\underrightarrow{\lim}}{\rm Ker}g_L \ar@/^/[dd]^{\gamma}\\
Q_L \ar@{->}[rr] \ar@/_/[dr]_{\ell_L} \ar@/_/[ddr]_{g_L} \ar@{->}[rr]^{\ell_{LK}} && Q_K  \ar@/^/[dl]^{\ell_K} \ar@/^/[ddl]^{g_K} \\
&  Q \ar[d]_{g}\\
& Q'\\
 }
\end{displaymath}
We can see that $0 \rightarrow {\underrightarrow{\lim}}{\rm Ker}g_L \overset{\gamma}{\rightarrow} Q \overset{g}{\rightarrow} Q'$ is an exact sequence. This implies that ${\rm Ker}g \cong {\underrightarrow{\lim}}{\rm Ker}g_L$. If each ${\rm Ker}g_L$ is a finitely generated projective unitary left $T$-module, then  ${\rm Ker}g$ is flat. But since $T$ is left perfect, ${\rm Ker}g$ is projective. So it is enough to show that if $f: P \rightarrow P'$ is a $T$-module homomorphism with $P, P' \in {\rm proj}(T)$, then ${\rm Ker}f \in {\rm proj}(T)$. Since $\Lambda$ is left artinian, by Lemma \ref{1.5} there is an  additive equivalence $\mathbf{G}: T{\rm Mod} \rightarrow R{\rm Mod}$.  Then we have the exact sequence
 $$0 \rightarrow \mathbf{FG}({\rm Ker}f) \rightarrow \mathbf{FG}(P) \overset{\mathbf{FG}(f)}{\rightarrow} \mathbf{FG}(P') \rightarrow \mathbf{FG}({\rm Coker}f) \rightarrow 0.$$
 Since the functors $\mathbf{F}$ and $\mathbf{G}$ preserve finitely generated projective modules,  by \cite[Proposition 2.1]{ausla2}, we have the commutative diagrams
\begin{displaymath}
\xymatrix{
0 \ar[r] & \mathbf{FG}({\rm Ker}f) \ar[r] \ar[d]_{\cong}  &
\mathbf{FG}(P) \ar[r]^{\mathbf{FG}(f)}  \ar[d]_{\cong} & \mathbf{FG}(P') \ar[d]_{\cong} \\
0 \ar [r] & \mathscr{X}(-,{\rm Ker}h) \ar[r] &  \mathscr{X}(-,X) \ar[r]^{\mathscr{X}(-,h)} & \mathscr{X}(-,X') }
\end{displaymath}
where the below row is exact and $h: X \rightarrow X'$ is a morphism in $\mathscr{X}$.  Since ${\rm res.dim}_{\mathscr{X}}\Lambda${\rm -mod} $\leq 2$, by Lemma \ref{1.7}, ${\rm Ker}h \in \mathscr{X}$. This yields that ${\rm Ker}f$ is a finitely generated projective unitary left $T$-module. Therefore ${\rm l.gl.dim}T \leq 2$.\\
$({\rm iii}) \Rightarrow ({\rm iv})$. Since $\Lambda \in \mathscr{X}$, $V$ is a generator in $\Lambda$-Mod and hence by Lemma \ref{1.6}, $T$ is a left locally noetherian ring. Since $T$ is a right perfect ring,  $T$ satisfies the descending chain condition on its finitely generated left ideals. It follows that every finitely generated unitary left $T$-module is artinian. Therefore $T$ is a left locally finite ring.\\
$({\rm iv}) \Rightarrow ({\rm i})$. The functor $\widehat{{\Hom}}_{\Lambda}(V,-)$ establishes an equivalence between ${\rm Add}(V)$ and the full subcategory ${\rm Proj}(T)$ of $T$Mod. Hereby indecomposable modules in $\mathscr{X}$ correspond to finitely generated indecomposable projective unitary left $T$-modules which are local by \cite[Remark 2.3]{za}. By using \cite[Lemma 2.4]{za} we can see that the functor $\widehat{{\Hom}}_{\Lambda}(V,-)$ yields a bijection between the complete set of representative of the isomorphic classes of indecomposable modules in $\mathscr{X}$ and the set of
projective covers of non-isomorphic simple unitary left $T$-modules. Since $\Lambda$ is left artinian, there are only finitely many non-isomorphic simple left $\Lambda$-modules $S_1, \cdots, S_n$.  For every  indecomposable module $X$ in $\mathscr{X}$, there is an epimorphism $g: X \rightarrow S_j$ for some $1 \leq j \leq n$.  Since $V$ is a generator in $\Lambda$-Mod,  by \cite[Proposition 51.5(1)]{wi} $\widehat{{\Hom}}_{\Lambda}(V,g): \widehat{{\Hom}}_{\Lambda}(V,X) \rightarrow \widehat{{\Hom}}_{\Lambda}(V,S_j)$ is non-zero. Hence the simple factor module of $\widehat{{\Hom}}_{\Lambda}(V,X)$ occurs as a composition factor of $\widehat{{\Hom}}_{\Lambda}(V,S_j)$. On the other hand, since $\mathscr{X}$ is contravariantly finite and $\Lambda \in \mathscr{X}$, each $\widehat{{\Hom}}_{\Lambda}(V,S_i)$ is finitely generated and so it has finite length since $T$ is a left locally finite ring. Hence there are only finitely many non-isomorphic simple unitary left $T$-modules and  consequently there exists only a finite number of non-isomorphic indecomposable modules in $\mathscr{X}$. Therefore $\mathscr{X}$ is of finite representation type.

Now we show that ${\rm res.dim}_{\mathscr{X}}\Lambda${\rm -mod} $\leq 2$. To do this we need to show that for each finitely generated left $\Lambda$-module $M$, ${\rm res.dim}_{\mathscr{X}}M \leq 2$.  Let $M$ be a finitely generated left $\Lambda$-module. Since $\mathscr{X}$ is a contravariantly finite subcategory of $\Lambda$-mod, we have an exact sequence $\mathscr{X}(-,X) \overset{\mathscr{X}(-,f)}{\rightarrow} \mathscr{X}(-,M) \rightarrow 0$ with $X \in \mathscr{X}$. We know that by Lemma \ref{1.5}, $T$ is Morita equivalent to the functor ring $R$ of $\mathscr{X}$. Hence $\mathscr{X}(-,M) \cong \mathbf{F}\mathbf{G}(N)$ for some finitely generated left $T$-module $N$, where $\mathbf{G}: T${\rm -Mod} $\rightarrow R{\rm Mod}$ is an equivalence of categories. Now let $g: P_0 \rightarrow N$ be an epimorphism with $P_0 \in {\rm proj}(T)$. Then we have an exact sequence $\mathbf{F}\mathbf{G}(P_0) \rightarrow \mathscr{X}(-,M) \rightarrow 0$. But by \cite[Proposition 2.1]{ausla2}, $\mathbf{F}\mathbf{G}(P_0) \cong \mathscr{X}(-,X_0)$ for some $X_0 \in \mathscr{X}$. Then there is a morphism $\mathscr{X}(-,h): \mathscr{X}(-,X_0) \rightarrow \mathscr{X}(-,X)$ such that the following diagram is commutative
\begin{displaymath}
\xymatrix{
\mathbf{F}\mathbf{G}(P_0) \ar[r] & \mathscr{X}(-,M) \ar@{=}[d] \ar[r]& 0\\
 \mathscr{X}(-,X_0) \ar[u]^{\cong} \ar[r]^{\mathscr{X}(-,f\circ h)} & \mathscr{X}(-,M) \ar[r]& 0}
\end{displaymath}
Since ${\rm l.gl.dim}T \leq 2$ and $T$ is a left artinian ring, by the similar argument we have the following commutative diagram
\begin{displaymath}
\xymatrix{
0 \ar[r] & \mathbf{F}\mathbf{G}(P_2) \ar[r] \ar[d]_{\cong}  &
\mathbf{F}\mathbf{G}(P_1) \ar[r]  \ar[d]_{\cong} & \mathbf{F}\mathbf{G}(P_0) \ar[d]_{\cong} \ar[r] & \mathscr{X}(-,M) \ar@{=}[d] \ar[r] & 0\\
0 \ar [r] & \mathscr{X}(-, X_2) \ar[r]^{\mathscr{X}(-,g_2)} &  \mathscr{X}(-,X_1) \ar[r]^{\mathscr{X}(-,g_1)} & \mathscr{X}(-,X_0) \ar[r]^{\mathscr{X}(-,f\circ h)} & \mathscr{X}(-,M) \ar[r] & 0}
\end{displaymath}
where $X_i \in \mathscr{X}$, $P_i \in {\rm proj}(T)$ and the rows are exact. Therefore ${\rm res.dim}_{\mathscr{X}}M \leq 2$ and the result follows.\\

$(2) ~ ({\rm i}) \Rightarrow ({\rm ii})$. Since $\Lambda$-Mod is a locally finitely presented category with products and also ${\rm fp}(\Lambda$-Mod$)= \Lambda$-mod,  by  \cite[Theorem 4.1]{wc}, the full subcategory $\mathscr{A}={\underrightarrow{\lim}}\mathscr{X}$ of $\Lambda$-Mod is a locally finitely presented category. Also ${\rm fp}(\mathscr{A})=\mathscr{X}$ because $\mathscr{X}$ is closed under direct summands.  From \cite[Theorem 1.4(2)]{wc} we observe that the functor $\mathbf{H}:  \mathscr{A} \rightarrow {\rm Mod}(\mathscr{X})$ via $A \mapsto \mathscr{X}(-,A)$ induces an equivalence between the category $\mathscr{A}$ and the category ${\rm Flat}(\mathscr{X})$. Hence by assumption we can easily see that $T$ is a left perfect ring. This implies that $T$ satisfies the descending chain condition on its finitely generated right ideals. Since $T$ is right locally noetherian, $T$ is a right locally finite ring.\\
$({\rm ii}) \Rightarrow ({\rm iii})$. We know that $V$ is a generator in $\Lambda$-Mod because $\Lambda \in \mathscr{X}$. Hence $V_T$ is finitely generated projective right $T$-module and so by assumption $V_T$ has finite length.\\
$({\rm iii}) \Rightarrow ({\rm i})$. Since $V_T$ has finite length,  we can easily see that $T$ is a right locally noetherian ring. Now consider the descending chain of $T$-submodules of $V$
$$VJ(T) \supseteq V{J(T)}^2 \supseteq \ldots$$
Since $V$ is an artinian right $T$-module, $V{J(T)}^n=V{J(T)}^{n+1}$ for some $n \in {\Bbb{N}}$. By Nakayama lemma we conclude $V{J(T)}^n=0$ because $V$ is a noetherian right $T$-module. Since  $V_T$ is faithful,  ${J(T)}^n=0$. On the other hand, since $T=\bigoplus_{\alpha \in J}Te_{\alpha}$, by \cite[Lemma 2.4]{za}, $T/J(T)\cong \bigoplus_{\alpha \in J}Te_{\alpha}/{\rm rad}(Te_{\alpha})$ is a semisimple ring.  Therefore $T$ is a left and right perfect ring. Since the functor $\textbf{F}: R{\rm Mod} \longrightarrow {\rm Mod}(\mathscr{X})$ is an equivalence which preserves flat modules and finitely generated projective modules, for each $A \in \mathscr{A}$ we have ${\rm Hom}_{\Lambda}(-,A)|_{\mathscr{X}} \simeq {\rm Hom}_{\Lambda}(-,\bigoplus_{i\in I}X_i)|_{\mathscr{X}}$, where each $X_i \in \mathscr{X}$. Since $\Lambda \in \mathscr{X}$, $A \cong \bigoplus_{i\in I}X_i$. Hence every module in $\mathscr{A}$ is a direct sum of finitely generated indecomposable modules.\\

$(3)$. If $T$ is a right locally finite ring, then by the proof of $((iii) \Rightarrow (i))$ of part $(2)$, $T$ is right perfect and so by $(1)$, $\mathscr{X}$ is of finite representation type and ${\rm res.dim}_{\mathscr{X}}\Lambda${\rm -mod} $\leq 2$ when ${\rm l.gl.dim}T \leq 2$. Now assume that $\mathscr{X}$ is of finite representation type and ${\rm res.dim}_{\mathscr{X}}\Lambda${\rm -mod} $=0$, then $\Lambda$ is of finite representation type and so by \cite[Proposition 4.2]{ausla2}, $T$ is an artinian ring with ${\rm l.gl.dim}T \leq 2$.
\end{proof}

The following example shows that the converse of Theorem \ref{2.1}$(3)$ doesn't hold when ``$0 < {\rm res.dim}_{\mathscr{X}}\Lambda${\rm -mod} $\leq 2$".

\begin{Examp}{\rm
Let $G \subseteq F$ be fields such that ${\rm dim}~ F_G= \infty$ and set $R=\begin{pmatrix}
F & F\\
0& G
\end{pmatrix}$ and $S=FQ/I$, where $Q$ is the quiver
$$\hskip .5cm \xymatrix@-1mm{
{1} \ar@{<-}[r]^{\alpha} &{2} \ar@{<-}[r]^{\beta} & {3}}$$ and  $I$ is the ideal of $FQ$ generated by $\beta \alpha$.\\
$({\rm i})$ By \cite[Proposition 1.1]{sim}, $R$ is a left hereditary left artinian ring but it isn't a right artinian ring. Set $\mathscr{Y}:={\rm proj}(R)$. It is easy to see that $\mathscr{Y}$ is a contravariantly finite resolving subcategory of $R$-mod with ${\rm res.dim}_{\mathscr{Y}}R${\rm -mod} $= 1$. Moreover it is of finite representation type while the Auslander ring $T$ of $\mathscr{Y}$ isn't a right artinian ring, because $R$ is Morita equivalent to $T$.\\
$({\rm ii})$ We know that $\Gamma=R \oplus S^{\rm op}$ is a left artinian ring. But it isn't a right artinian ring. Put $\mathscr{Z} :={\rm proj}(\Gamma)$. We can see that $\mathscr{Z}$ is a contravariantly finite resolving subcategory of $\Gamma$-mod with  ${\rm res.dim}_{\mathscr{Z}}\Gamma${\rm -mod} $= 2$. In particular $\mathscr{Z}$ is of finite representation type. But because the Auslander ring $T'$ of $\mathscr{Z}$ is Morita equivalent to $\Gamma$, the Auslander ring $T'$ of $\mathscr{Z}$ isn't a right artinian ring.}
\end{Examp}

Let $F$ be an $\mathscr{X}^{\rm op}$-module and $X \in \mathscr{X}$. An element $x \in F(X)$ is said to be a {\it minimal element} if $x \neq 0$ and for any proper epimorphism $f: X \rightarrow X'$ with $X' \in \mathscr{X}$, $F(f)(x) = 0$ (see \cite[Page 292]{ausla2}).

The following lemmas are analogue of \cite[Lemma 3.2 and Propositions 3.3 and 3.4]{ausla2}.

\begin{Lem}\label{1.1}
Let $F$ be an $\mathscr{X}^{\rm op}$-module and $X \in \mathscr{X}$.
\begin{itemize}
\item[$({\rm i})$] If $F(X)$ has a minimal element, then $X$ is an indecomposable left $\Lambda$-module.
\item[$({\rm ii})$] If $X$ is a noetherian left $\Lambda$-module, then for each $0 \neq x \in F(X)$ there is an epimorphism $f: X \rightarrow X'$ with $X' \in \mathscr{X}$ such that $F(f)(x)$ is a minimal element in $F(X')$.
\end{itemize}
\end{Lem}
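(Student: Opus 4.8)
The plan is to follow Auslander's arguments for \cite[Lemma 3.2 and Propositions 3.3, 3.4]{ausla2}, using only that $F$ is a covariant additive functor (so it preserves finite biproducts, is additive on morphism groups, and sends isomorphisms to isomorphisms) and that, as usual, a \emph{proper} epimorphism means one that is not an isomorphism. The extra inputs about $\mathscr{X}$ that are needed are that it is closed under direct summands and under isomorphisms.

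For $({\rm i})$ I would argue contrapositively. Suppose $X$ is decomposable, $X \cong X_1 \oplus X_2$ with $X_1, X_2 \neq 0$; then $X_1, X_2 \in \mathscr{X}$ since $\mathscr{X}$ is closed under direct summands. Let $p_i \colon X \to X_i$ and $\iota_i \colon X_i \to X$ be the biproduct structure maps, so $\iota_1 p_1 + \iota_2 p_2 = \id_X$. Applying $F$ and evaluating at $x \in F(X)$ gives $x = F(\iota_1)\bigl(F(p_1)(x)\bigr) + F(\iota_2)\bigl(F(p_2)(x)\bigr)$. Since $X_2 \neq 0$, the epimorphism $p_1 \colon X \to X_1$ is not an isomorphism, hence a proper epimorphism with $X_1 \in \mathscr{X}$; if $x$ were minimal this would force $F(p_1)(x) = 0$, and symmetrically $F(p_2)(x) = 0$, so $x = 0$, a contradiction. (When $X = 0$ there is nothing to prove, since $F(0) = 0$.) Hence a minimal element of $F(X)$ can occur only when $X$ is indecomposable.

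For $({\rm ii})$, fix $0 \neq x \in F(X)$ and let $\Sigma$ be the set of submodules $K \subseteq X$ with $X/K \in \mathscr{X}$ and $F(\pi_K)(x) \neq 0$, where $\pi_K \colon X \to X/K$ is the canonical projection. Since $X/0 = X \in \mathscr{X}$ and $F(\pi_0)(x) = F(\id_X)(x) = x \neq 0$, we have $0 \in \Sigma$; as $X$ is noetherian, $\Sigma$ has a maximal element $K_0$. Put $X' := X/K_0$ and $f := \pi_{K_0}$, an epimorphism with $X' \in \mathscr{X}$ and $F(f)(x) \neq 0$. I claim $F(f)(x)$ is minimal in $F(X')$. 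Let $g \colon X' \to X''$ be any proper epimorphism with $X'' \in \mathscr{X}$, and set $K := f^{-1}({\rm Ker}\,g) \subseteq X$. Then $K_0 \subseteq K$, and $K_0 \neq K$ because ${\rm Ker}\,g \neq 0$ (a surjection of modules which is not an isomorphism has nonzero kernel); moreover $gf \colon X \to X''$ is an epimorphism with kernel $K$, so $X/K \cong X'' \in \mathscr{X}$. Maximality of $K_0$ thus forces $K \notin \Sigma$, i.e.\ $F(\pi_K)(x) = 0$; since $\pi_K$ differs from $gf$ only by the canonical isomorphism $X'' \cong X/K$, we conclude $F(gf)(x) = 0$, that is $F(g)\bigl(F(f)(x)\bigr) = 0$. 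Therefore $F(f)(x)$ is a minimal element of $F(X')$.

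Both statements are essentially formal, so I do not expect a real obstacle; the step requiring the most care is the bookkeeping in $({\rm ii})$ — checking that replacing $g$ by $K = f^{-1}({\rm Ker}\,g)$ strictly enlarges $K_0$ while keeping the quotient $X/K$ inside $\mathscr{X}$ — together with the observation that the noetherian hypothesis enters exactly, and only, to guarantee that the nonempty family $\Sigma$ has a maximal element.
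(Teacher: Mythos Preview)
Your proposal is correct and follows essentially the same approach as the paper. The paper's proof is terser—it simply records that (i) follows from closure under direct summands and that (ii) follows by choosing a maximal element of the same family $\Sigma$ you define (using the noetherian hypothesis) and then invoking the argument of \cite[Lemma 3.2(b)]{ausla2}; you have spelled out those details accurately.
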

\begin{proof}
$({\rm i})$. It follows from the fact that $\mathscr{X}$ is closed under direct summands.\\
$({\rm ii})$. Let $0 \neq x \in F(X)$ and $\mathcal{S}$ be the set of all $\Lambda$-submodules $Y$ of $X$ such that $X/Y \in \mathscr{X}$ and the canonical projection $\pi: X \rightarrow X/Y$ has the property that $F(\pi)(x)\neq 0$. Since $\mathcal{S}$ is not empty, there is a maximal element $Y'$ in $\mathcal{S}$.  Then $X/Y'\in \mathscr{X}$ and $F(\pi)(x)\neq 0$, where $\pi: X \rightarrow X/Y'$ is the canonical projection. We can see that $x'=F(\pi)(x)$ is a minimal element in $F(X/Y')$ by applying the technique used in the proof of \cite[Lemma 3.2(b)]{ausla2}. Therefore the result holds.
\end{proof}

We recall from \cite{ausla2} that a family of morphisms is called {\it noetherian} if for each sequence $X_1 \overset{f_1}{\rightarrow} X_2 \overset{f_2}{\rightarrow} X_3 \rightarrow \cdots$ of morphisms in the family such that $f_i \circ f_{i-1} \circ \cdots \circ f_1 \neq 0$ for all $i$, there is an integer $n$ such that $f_k$ is an isomorphism for all $k \geq n$. Also a family of morphisms is said to be {\it artinian} if for each sequence $ \cdots \rightarrow X_3 \overset{g_2}{\rightarrow} X_2 \overset{g_1}{\rightarrow} X_1$ of morphisms in the family such that $g_1 \circ g_{2} \circ \cdots \circ g_i \neq 0$ for all $i$, there is an integer $n$ such that $g_k$ is an isomorphism for all $k \geq n$ (see \cite[Page 290]{ausla2}). Let $F$ be an $\mathscr{X}^{\rm op}$-module and $X \in \mathscr{X}$. An element $x \in F(X)$ is said to be a {\it universally minimal element} if $x \neq 0$ and for any morphism $f: X \rightarrow X'$ in $\mathscr{X}$ which isn't an split monomorphism, $F(f)(x) = 0$ (see \cite[Page 292]{ausla2}).

\begin{Lem}\label{1.2}
Let $\Lambda$ be a left noetherian ring and $F$ be a non-zero $\mathscr{X}^{\rm op}$-module. If every family of morphisms between indecomposable modules in $\mathscr{X}$ is noetherian, then $F(X)$ has a universally minimal element for some $X \in \mathscr{X}$.
\end{Lem}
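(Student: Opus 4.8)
The plan is to follow the strategy of \cite[Proposition 3.3]{ausla2}: I would argue by an iterative construction that either terminates at a universally minimal element, or else produces an infinite chain $X_1\overset{f_1}{\rightarrow}X_2\overset{f_2}{\rightarrow}X_3\rightarrow\cdots$ of morphisms between indecomposable modules in $\mathscr{X}$ in which all partial composites are nonzero but no $f_i$ is ever an isomorphism, contradicting the hypothesis that every family of morphisms between indecomposable modules in $\mathscr{X}$ is noetherian.

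\emph{Base step.} Since $F\neq 0$, choose $X_0\in\mathscr{X}$ and $0\neq x_0\in F(X_0)$. As $\Lambda$ is left noetherian and $X_0\in\Lambda$-mod, $X_0$ is a noetherian $\Lambda$-module, so Lemma \ref{1.1}(ii) gives an epimorphism $f_0:X_0\rightarrow X_1$ with $X_1\in\mathscr{X}$ such that $x_1:=F(f_0)(x_0)$ is a minimal element of $F(X_1)$; in particular $x_1\neq 0$ and, by Lemma \ref{1.1}(i), $X_1$ is indecomposable.

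\emph{Recursive step.} Suppose I have reached $0\neq x_i\in F(X_i)$ with $X_i$ indecomposable in $\mathscr{X}$. If $x_i$ is universally minimal, the construction stops and the lemma is proved. Otherwise there is a morphism $g_i:X_i\rightarrow Y_i$ in $\mathscr{X}$ that is not a split monomorphism and with $F(g_i)(x_i)\neq 0$. Since $Y_i\in\Lambda$-mod is again noetherian, Lemma \ref{1.1}(ii) applied to $F(g_i)(x_i)$ yields an epimorphism $h_i:Y_i\rightarrow X_{i+1}$ with $X_{i+1}\in\mathscr{X}$ such that $x_{i+1}:=F(h_i)\big(F(g_i)(x_i)\big)=F(f_i)(x_i)$ is minimal in $F(X_{i+1})$, where $f_i:=h_i\circ g_i$. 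Then $x_{i+1}\neq 0$ and, by Lemma \ref{1.1}(i), $X_{i+1}$ is indecomposable, so the recursion continues. If it never stops, then $F(f_i\circ\cdots\circ f_1)(x_1)=x_{i+1}\neq 0$ for every $i$, so no partial composite vanishes; the noetherian hypothesis forces some $n$ with $f_k$ an isomorphism for all $k\geq n$. But each $f_i=h_i\circ g_i$ cannot be an isomorphism, since then $f_i^{-1}\circ h_i$ would be a retraction of $g_i$, contradicting that $g_i$ is not a split monomorphism. This contradiction shows the construction terminates, i.e. $F(X_m)$ has a universally minimal element for some $m$.

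The routine parts are that submodules and epimorphic images of the finitely generated modules involved are noetherian (immediate from $\Lambda$ being left noetherian) and the additivity of $F$ used to write $F(h_i\circ g_i)=F(h_i)\circ F(g_i)$. The crux of the argument, and the point I would state with care, is twofold: keeping every object of the chain indecomposable, which is exactly why at each stage I pass to a minimal element via Lemma \ref{1.1}; and the observation that $f_i=h_i\circ g_i$ can never be an isomorphism, which is what turns the noetherian hypothesis into the desired contradiction.
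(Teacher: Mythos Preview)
Your argument is correct and is precisely the explicit version of the technique the paper invokes from \cite[Proposition 3.3]{ausla2}: iteratively replace a minimal element by its image under a non-split morphism followed by passage to a minimal element via Lemma \ref{1.1}(ii), keeping all objects indecomposable by Lemma \ref{1.1}(i), and contradict the noetherian hypothesis if the process does not terminate. One cosmetic remark: the identity $F(h_i\circ g_i)=F(h_i)\circ F(g_i)$ is functoriality rather than additivity.
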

\begin{proof}
By using Lemma \ref{1.1}, the assumption and by applying the technique used in the proof of \cite[Proposition 3.3]{ausla2}, we can see that there is an $X \in \mathscr{X}$ and a minimal element $x \in F(X)$ with the property that any morphism $f: X \rightarrow X'$ in $\mathscr{X}$ is an isomorphism if $F(f)(x)$ is a minimal element in $F(X')$ and then the result follows.
\end{proof}

Let $\Lambda$ be a left artinian ring. We recall from \cite{ausla2} that a morphism $f:C \rightarrow D$ in $\mathscr{X}$ is called {\it right almost split} if
\begin{center}
\hspace{-9cm}$i)$ $f$ is not a split epimorphism;\\
\hspace{-0.5cm}$ii)$ for any morphism $g: X \rightarrow D$ in $\mathscr{X}$ which is not a split epimorphism, there is a\\
 \hspace{-7.5cm}morphism $h: X \rightarrow C$ such that $f\circ h = g$.
\end{center}

\begin{Lem}\label{1.3}
Let $\Lambda$ be a left artinian ring and $\mathscr{X}$ be a resolving subcategory of $\Lambda$-{\rm mod} with the property that every family of morphisms between indecomposable modules in $\mathscr{X}$ is noetherian. Then for each indecomposable non-projective left $\Lambda$-module $X \in \mathscr{X}$, there is a right almost split morphism $f: B \rightarrow X$ in $\mathscr{X}$.
\end{Lem}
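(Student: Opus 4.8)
The plan is to obtain $f$ as the epimorphism of an almost split short exact sequence ending at $X$, and to produce that sequence from a universally minimal element of an $\mathscr{X}^{\rm op}$-module attached to $X$, via Lemmas~\ref{1.1} and~\ref{1.2}.

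Fix an indecomposable non-projective module $X\in\mathscr{X}$; since $\Lambda$ is left artinian, $\End_\Lambda(X)$ is a local ring. Consider the $\mathscr{X}^{\rm op}$-module $F:=\Ext^1_\Lambda(X,-)|_{\mathscr{X}}$, that is, the covariant functor $Y\mapsto\Ext^1_\Lambda(X,Y)$ on $\mathscr{X}$. Because $\mathscr{X}$ is resolving it contains every finitely generated projective module and is closed under syzygies, so for a projective cover $P\to X$ the exact sequence $0\to\Omega X\to P\to X\to 0$ lies in $\mathscr{X}$ and, as $X$ is non-projective, does not split; hence $\Ext^1_\Lambda(X,\Omega X)\neq 0$ and $F\neq 0$. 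Since $\Lambda$ is left noetherian and every family of morphisms between indecomposable modules in $\mathscr{X}$ is noetherian, Lemma~\ref{1.2} supplies an object $C\in\mathscr{X}$ together with a universally minimal element $\xi\in F(C)=\Ext^1_\Lambda(X,C)$. A universally minimal element is in particular a minimal element, because a proper epimorphism is never a split monomorphism, so $C$ is indecomposable by Lemma~\ref{1.1}(i).

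Let $0\to C\xrightarrow{u}B\xrightarrow{f}X\to 0$ be an exact sequence representing $\xi$. As $\mathscr{X}$ is closed under extensions and $C,X\in\mathscr{X}$, we have $B\in\mathscr{X}$, and $f$ is not a split epimorphism since $\xi\neq 0$. First I would check that $u$ is left almost split in $\mathscr{X}$: for a morphism $v\colon C\to C'$ in $\mathscr{X}$ which is not a split monomorphism, universal minimality gives $v_{*}\xi=0$, so the pushout sequence $0\to C'\to B''\to X\to 0$ splits, and composing a retraction of $C'\to B''$ with the canonical map $B\to B''$ produces $w\colon B\to C'$ with $wu=v$. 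Next I would show that $f$ is right almost split. Let $g\colon Y\to X$ be a morphism in $\mathscr{X}$ that is not a split epimorphism and form the pullback $0\to C\xrightarrow{u'}B'\xrightarrow{g'}Y\to 0$ of $f$ along $g$, with $\pi\colon B'\to B$ such that $f\pi=gg'$ and $\pi u'=u$; here $B'\in\mathscr{X}$. If this pullback sequence splits, a section of $g'$ followed by $\pi$ factors $g$ through $f$. If it does not split, then $u'$ is not a split monomorphism, so the previous step yields $w\colon B\to B'$ with $wu=u'$; from $\pi wu=u$ and $g'wu=0$ the morphisms $\pi w-\id_B$ and $g'w$ factor through the cokernel $f\colon B\to X$ of $u$, say $\pi w-\id_B=tf$ and $g'w=sf$, and then $(\id_X+ft)f=f\pi w=gg'w=gsf$, whence $gs=\id_X+ft$ in $\End_\Lambda(X)$ since $f$ is an epimorphism. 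As $f$ is not a split epimorphism, $ft$ is a non-unit of the local ring $\End_\Lambda(X)$, so $\id_X+ft$ is invertible and $g$ is a split epimorphism, a contradiction. Hence the pullback splits, $g$ factors through $f$, and $f$ is right almost split; in fact $0\to C\to B\xrightarrow{f}X\to 0$ is an almost split sequence in $\mathscr{X}$.

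The routine parts are the non-vanishing of $F$, which is immediate from $\mathscr{X}$ being resolving, and the pushout bookkeeping making $u$ left almost split. The main obstacle is the last step: converting universal minimality of $\xi$, which only controls its covariant (pushout) behaviour, into the pullback condition defining a right almost split morphism. This is the classical mechanism that identifies almost split sequences, and it is precisely there that the locality of $\End_\Lambda(X)$ is played against the assumed non-split pullback.
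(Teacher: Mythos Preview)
Your proof is correct and follows the same route as the paper: nonvanishing of $\Ext^1_\Lambda(X,-)|_{\mathscr X}$ via resolving, a universally minimal extension from Lemma~\ref{1.2}, and then the standard pushout/pullback plus local-endomorphism-ring argument. The paper simply cites this last step as ``the technique used in the proof of \cite[Proposition~3.4]{ausla2}'', whereas you have written it out in full.
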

\begin{proof}
Let $X$ be an indecomposable non-projective left $\Lambda$-module in $\mathscr{X}$. Since $\mathscr{X}$ is resolving we can see that the covariant additive functor ${\rm Ext}_{\Lambda}^1(X,-): \mathscr{X} \rightarrow \mathit{Ab}$ is non-zero. Hence by Lemma \ref{1.2}, ${\rm Ext}_{\Lambda}^1(X,A)$ has a universally minimal element for some $A \in \mathscr{X}$. Let $x:= \hspace{3mm} 0 \rightarrow A \rightarrow B \overset{g}{\rightarrow} X \rightarrow 0$  be a universally minimal element in  ${\rm Ext}_{\Lambda}^1(X,A)$. Then $g$ is not a split epimorphism. Since $\mathscr{X}$ is resolving and $\Lambda$ is a left artinian ring,  we get $g: B \rightarrow X$ is a right almost split morphism by applying the technique used in the proof of \cite[Proposition 3.4]{ausla2}.
\end{proof}

An  $\mathscr{X}$-module $M$ is called {\it noetherian {\rm (resp.,} artinian{\rm )}} if it satisfies the ascending (resp., descending) chain condition on its submodules. Also $M$ is said to be {\it finite} if it is both noetherian and artinian. An $\mathscr{X}$-module $M$ is called {\it locally finite} if every finitely generated submodule of $M$ is finite. We recall from \cite{ausla2} that the category ${\rm Mod}(\mathscr{X})$ is called {\it locally finite} if every $\mathscr{X}$-module is locally finite.\\

Now, we prove the following result that gives a characterization of contravariantly finite resolving subcategories $\mathscr{X}$ of $\Lambda$-mod of finite representation type in terms of Mod$(\mathscr{X})$.

\begin{The}\label{2.2}
Let $\Lambda$ be a left artinian ring and $\mathscr{X}$ be a contravariantly finite resolving subcategory of $\Lambda$-{\rm mod} which contains $J(\Lambda)$. Then the following statements are equivalent.
\begin{itemize}
\item[$({\rm i})$] $\mathscr{X}$ is of finite representation type.
\item[$({\rm ii})$] Any family of homomorphisms between indecomposable modules in $\mathscr{X}$ is both noetherian and artinian.
\item[$({\rm iii})$] ${\rm Mod}(\mathscr{X})$ is locally finite.
\item[$({\rm iv})$] $\mathscr{X}(-,X)$ is finite for each $X \in \mathscr{X}$.
\item[$({\rm v})$] $\mathscr{X}(-,S)$ is finite for each simple left $\Lambda$-module $S$.
\end{itemize}
\end{The}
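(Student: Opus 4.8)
The plan is to establish all the equivalences by first treating the three functor-theoretic conditions $(\mathrm{iii})$, $(\mathrm{iv})$, $(\mathrm{v})$ together, then tying them to finite type $(\mathrm{i})$, and finally proving $(\mathrm{i})\Leftrightarrow(\mathrm{ii})$ separately; the closed web I aim for is $(\mathrm{i})\Rightarrow(\mathrm{v})\Rightarrow(\mathrm{iv})\Leftrightarrow(\mathrm{iii})$, $(\mathrm{iv})\Rightarrow(\mathrm{v})\Rightarrow(\mathrm{i})$, together with $(\mathrm{i})\Rightarrow(\mathrm{ii})\Rightarrow(\mathrm{i})$. The equivalence $(\mathrm{iii})\Leftrightarrow(\mathrm{iv})$ is formal: each representable $\mathscr{X}(-,X)$ is finitely generated (by $\mathrm{id}_X$), so local finiteness of $\mathrm{Mod}(\mathscr{X})$ forces it to be finite, while conversely any finitely generated $\mathscr{X}$-module is a quotient of a finite direct sum of representables, hence finite once the representables are, whence every finitely generated subfunctor of an arbitrary $\mathscr{X}$-module is finite.

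For $(\mathrm{i})\Rightarrow(\mathrm{v})$ I would use that the target is simple. If $V_1,\dots,V_m$ are the finitely many indecomposables in $\mathscr{X}$, then $\mathscr{X}(-,S)$ is determined by the groups $\mathrm{Hom}_\Lambda(V_i,S)\cong\mathrm{Hom}_\Lambda(\mathrm{top}\,V_i,S)$, each of finite length over the division ring $\mathrm{End}_\Lambda(S)$ and hence over $\mathrm{End}_\Lambda(V_i)$; since any chain of subfunctors must stabilise at each of the finitely many $V_i$, the functor $\mathscr{X}(-,S)$ is finite. This is exactly the point where restricting to simple targets sidesteps the potentially infinite-dimensional Hom-spaces that occur for general $X$. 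The reverse passage $(\mathrm{v})\Rightarrow(\mathrm{iv})$ proceeds by dévissage along the Loewy series: for $X\in\mathscr{X}$ the short exact sequence $0\to J(\Lambda)X\to X\to X/J(\Lambda)X\to0$ and left exactness of $\mathrm{Hom}_\Lambda(-,-)|_{\mathscr{X}}$ give an exact sequence $0\to\mathscr{X}(-,J(\Lambda)X)\to\mathscr{X}(-,X)\to\mathscr{X}(-,X/J(\Lambda)X)$ whose outer terms are finite (the right-hand one is a finite direct sum of functors $\mathscr{X}(-,S_j)$ by $(\mathrm{v})$, the left-hand one by induction on Loewy length), so the middle term is finite, being an extension of a submodule of a finite module by a finite module. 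Finally $(\mathrm{iv})\Rightarrow(\mathrm{v})$ is immediate from contravariant finiteness: a right $\mathscr{X}$-approximation $X_S\to S$ makes $\mathscr{X}(-,S)$ a quotient of the representable $\mathscr{X}(-,X_S)$, hence finite.

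To close the functor-theoretic part I would prove $(\mathrm{v})\Rightarrow(\mathrm{i})$ by a composition-factor count modelled on the argument for Theorem \ref{2.1}. For an indecomposable $X\in\mathscr{X}$ the functor $\mathscr{X}(-,X)$ is a local projective object of $\mathrm{Mod}(\mathscr{X})$ (since $\mathrm{End}_\Lambda(X)$ is local), with simple top $S_X$, and distinct indecomposables give distinct $S_X$. Choosing an epimorphism $X\twoheadrightarrow S_j$ onto a simple summand of $\mathrm{top}\,X$ and evaluating at $\Lambda\in\mathscr{X}$ shows that the induced map $\mathscr{X}(-,X)\to\mathscr{X}(-,S_j)$ is nonzero, so $S_X$ occurs as a composition factor of $\mathscr{X}(-,S_j)$. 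As there are only finitely many simple $\Lambda$-modules $S_1,\dots,S_n$ and each $\mathscr{X}(-,S_j)$ has finite length by $(\mathrm{v})$, only finitely many $S_X$ can arise, so $\mathscr{X}$ has finitely many indecomposables.

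It remains to handle $(\mathrm{i})\Leftrightarrow(\mathrm{ii})$. The direction $(\mathrm{i})\Rightarrow(\mathrm{ii})$ is a direct application of the Harada--Sai lemma: finite type gives a uniform bound on the lengths of the indecomposables in $\mathscr{X}$, so any sufficiently long composite of non-isomorphisms between them vanishes, which forces both the noetherian and the artinian conditions on families. The converse $(\mathrm{ii})\Rightarrow(\mathrm{i})$ is the main obstacle. Here I would feed the noetherian hypothesis into Lemma \ref{1.3} to produce, for every indecomposable non-projective $X\in\mathscr{X}$, a right almost split morphism $B\to X$ in $\mathscr{X}$ (equivalently a minimal projective presentation of the simple functor $S_X$), and dually use the artinian hypothesis to obtain left almost split morphisms; this equips $\mathscr{X}$ with a full almost-split structure and an Auslander--Reiten quiver. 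The finiteness itself then comes from a connectedness-and-termination argument: the finitely many indecomposable projective $\Lambda$-modules lie in $\mathscr{X}$ (summands of $\Lambda$), and the hypothesis $J(\Lambda)\in\mathscr{X}$ guarantees that each $\mathrm{rad}\,P\in\mathscr{X}$, so that the radical inclusions are genuine morphisms of $\mathscr{X}$ linking arbitrary indecomposables to the projective ones; the two chain conditions then prevent infinite paths and bound the number of indecomposables. Making this precise --- verifying that every indecomposable is reached from the projectives through the almost-split structure and that the chain conditions terminate the process --- is the step I expect to require the most care.
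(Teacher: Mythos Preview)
Your treatment of the equivalences among $(\mathrm{i})$, $(\mathrm{iii})$, $(\mathrm{iv})$, $(\mathrm{v})$ is correct, though organised differently from the paper; your d\'evissage $(\mathrm{v})\Rightarrow(\mathrm{iv})$ and your direct $(\mathrm{i})\Rightarrow(\mathrm{v})$ are valid extra shortcuts the paper does not need because it runs a single cycle $(\mathrm{i})\Rightarrow(\mathrm{ii})\Rightarrow(\mathrm{iii})\Rightarrow(\mathrm{iv})\Rightarrow(\mathrm{v})\Rightarrow(\mathrm{i})$. (One small imprecision: in your d\'evissage the induction has to run over all $M\in\Lambda\text{-mod}$, not only $M\in\mathscr{X}$, since $J(\Lambda)X$ need not lie in $\mathscr{X}$; this is harmless because contravariant finiteness makes each $\mathscr{X}(-,M)$ finitely generated anyway.) Your $(\mathrm{v})\Rightarrow(\mathrm{i})$ is exactly the composition-factor count the paper invokes.

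The genuine gap is in $(\mathrm{ii})\Rightarrow(\mathrm{i})$. Your plan to extract \emph{left} almost split morphisms ``dually'' from the artinian hypothesis is not justified: Lemma~\ref{1.3} uses that $\mathscr{X}$ is resolving (so $\mathrm{Ext}^1_\Lambda(X,-)$ restricted to $\mathscr{X}$ is nonzero for non-projective $X$), and there is no coresolving hypothesis available to run the dual. Even granting both kinds of almost split maps, the ``connectedness-and-termination'' step you flag as delicate really is the whole difficulty; nothing in the hypotheses guarantees that every indecomposable of $\mathscr{X}$ is reached from the projectives by irreducible maps. The paper sidesteps all of this by proving $(\mathrm{ii})\Rightarrow(\mathrm{iii})$ instead, and the argument recycles the very ingredients you already isolated. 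From the noetherian half, Lemma~\ref{1.3} gives right almost split maps for the non-projective indecomposables, and the assumption $J(\Lambda)\in\mathscr{X}$ supplies them for the projectives (via $\mathrm{rad}\,P\hookrightarrow P$); by \cite[Proposition~2.7]{ausla2} this means every simple $\mathscr{X}$-module is finitely presented. The artinian half is used, not to build left almost split maps, but to show that every nonzero $\mathscr{X}$-module has a simple submodule: if some $F$ had none, one manufactures an infinite descending chain $\cdots\to Y_3\to Y_2\to Y_1$ of non-isomorphisms between indecomposables of $\mathscr{X}$ with all composites nonzero, contradicting the artinian condition. These two facts feed directly into Auslander's criterion \cite[Proposition~1.11]{ausla2} to conclude that $\mathrm{Mod}(\mathscr{X})$ is locally finite. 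This closes the cycle without any Auslander--Reiten quiver combinatorics and is the route you should take.
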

\begin{proof}
$({\rm i}) \Rightarrow ({\rm ii})$. It follows from Harada-Sai Lemma.\\
$({\rm ii}) \Rightarrow ({\rm iii})$.  By \cite[Proposition 1.11]{ausla2}, it is enough to show that each simple $\mathscr{X}$-module is finitely presented and each non-zero $\mathscr{X}$-module has a simple submodule. Since any family of homomorphisms between indecomposable modules in $\mathscr{X}$ is noetherian, by Lemma \ref{1.3}, every indecomposable non-projective module in $\mathscr{X}$ has a right almost split morphism. Since $J(\Lambda) \in \mathscr{X}$,  every indecomposable projective module in $\mathscr{X}$ has a right almost split morphism. Therefore by \cite[Proposition 2.7]{ausla2}, each simple $\mathscr{X}$-module is finitely presented. Now we show that each non-zero $\mathscr{X}$-module has a simple submodule. We apply the technique used in the proof of $({\rm ii}) \Rightarrow ({\rm iii})$ of \cite[Theorem 3.1]{ab}. Let $F$ be a non-zero $\mathscr{X}$-module which has not simple submodule. There is an indecomposable module $Y_1$ in $\mathscr{X}$ such that $F(Y_1) \neq 0$. By Yoneda Lemma, there is a non-zero morphism $\alpha_0: \mathscr{X}(-,Y_1) \rightarrow F$. Set $F_0={\rm Im}\alpha_0$.  $F_0$ is a non-zero submodule of $F$. Since $F$ has not simple submodule, $F_0$ contains a proper non-zero submodule $F_1$. The above argument shows that there is a non-zero morphism $\alpha_1: \mathscr{X}(-,Y_2) \rightarrow F_1$, where $Y_2$ is an indecomposable module in $\mathscr{X}$.  Since $\mathscr{X}(-,Y_2)$ is projective, $\alpha_1 \neq 0$ and $F_1$ is a proper submodule of $F_0$, we have a non-zero non-isomorphism $g_1: Y_2 \rightarrow Y_1$. By the similar argument, we can see that $F_2={\rm Im}\alpha_1$ contains a proper non-zero submodule $F_3$. Also there is a non-zero morphism $\alpha_2: \mathscr{X}(-,Y_3) \rightarrow F_3$, where $Y_3$ is an indecomposable module in $\mathscr{X}$. This implies that there is a non-zero non-isomorphism $g_2: Y_3 \rightarrow Y_2$ that $g_1 \circ g_2 \neq 0$. By continuing this process, we obtain a sequence $\cdots \rightarrow Y_3 \overset{g_2}{\rightarrow} Y_2 \overset{g_1}{\rightarrow} Y_1$
of non-isomorphisms between indecomposable modules of $\mathscr{X}$ such that $g_1\circ g_2 \circ \cdots \circ g_n \neq 0$ for each $n \in {\Bbb{N}}$,  which is a contradiction. Therefore each non-zero $\mathscr{X}$-module has a simple submodule and the result follows. \\
$({\rm iii}) \Rightarrow ({\rm iv})$ is clear.\\
$({\rm iv}) \Rightarrow ({\rm v})$. It follows from the fact that every simple left $\Lambda$-module has a right $\mathscr{X}$-approximation.\\
$({\rm v}) \Rightarrow ({\rm i})$. One can prove this implication by the technique used in the proof of $(c) \Rightarrow (d)$ of \cite[Proposition 3.1]{ausla2}.\\
\end{proof}

Let $\lbrace U_{\alpha}~|~ \alpha \in J \rbrace$ be a complete set of representative of the isomorphic classes of finitely presented left $\Lambda$-modules and set $U= \bigoplus _{\alpha \in J} U_{\alpha}$. We recall from \cite[Sect. 1]{wis} that a short exact sequence in $\Lambda$-Mod is called {\it pure} if it remains exact under the functor $\widehat{{\rm Hom}}_{\Lambda}(U,-)$.  Equivalently it remains exact under the functor $M\otimes_{\Lambda} -$ for each finitely presented right $\Lambda$-module $M$ (see \cite[Proposition 34.5]{wi}). A ring $\Lambda$ is called {\it left} {\rm (resp., {\it right})} {\it pure semisimple} if every short pure exact sequence in $\Lambda$-Mod (resp., Mod-$\Lambda$) splits (see \cite[Sect. 3]{wis}).  A ring $\Lambda$ is called {\it of bounded representation type} if it is left artinian and $\Lambda$-mod is of bounded representation type (see \cite[Sect. 3]{wis}). Note that a ring $R$ with enough idempotents is called {\it left functor ring} of $\Lambda$ if $R$ is the functor ring of $\Lambda$-mod (see \cite[Sect. 2]{wis}).

\begin{Cor}{\rm (See \cite[Theorem 3.1]{ausla2} and \cite[Theorem 3.1]{wis})}
For a ring $\Lambda$ with left functor ring $R$, the following statements are equivalent.
\begin{itemize}
\item[$({\rm i})$] $\Lambda$ is of finite representation type.
\item[$({\rm ii})$] $\Lambda$ is of bounded representation type.
\item[$({\rm iii})$] $R$ is a left and right perfect ring.
\item[$({\rm iv})$] $R$ is a left locally finite ring.
\item[$({\rm v})$] $R$ is a right locally finite ring.
\item[$({\rm vi})$] $\Lambda$ is a left and right pure semisimple ring.
\item[$({\rm vii})$] $\Lambda$ is a left artinian ring and any family of homomorphisms between finitely generated indecomposable left $\Lambda$-modules is both noetherian and artinian.
\item[$({\rm viii})$] $\Lambda$ is a left artinian ring and ${\rm Mod}(\Lambda$-{\rm mod}$)$ is locally finite.
\item[$({\rm ix})$] $\Lambda$ is a left artinian ring and ${\rm Hom}_{\Lambda}(-,X)$ is finite for each finitely generated left $\Lambda$-module $X$.
\item[$({\rm x})$] $\Lambda$ is a left artinian ring and ${\rm Hom}_{\Lambda}(-,S)$ is finite for each simple left $\Lambda$-module $S$.
\end{itemize}
 \end{Cor}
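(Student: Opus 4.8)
The plan is to deduce the corollary by specialising Theorem \ref{2.1} and Theorem \ref{2.2} to the trivial subcategory $\mathscr{X}=\Lambda${\rm -mod}, and then quoting the classical statements \cite[Theorem 3.1]{ausla2} and \cite[Theorem 3.1]{wis} for the implications these theorems do not themselves supply. Throughout one may assume that $\Lambda$ is left artinian; I defer the justification of this reduction to the last paragraph. Then $\Lambda${\rm -mod} is a contravariantly finite resolving subcategory of $\Lambda${\rm -mod} containing $J(\Lambda)$, with ${\rm res.dim}_{\mathscr{X}}\Lambda${\rm -mod} $=0$ and with closure under direct limits equal to $\Lambda${\rm -Mod}; its functor ring is, by definition, the left functor ring $R$ of $\Lambda$, and by Lemma \ref{1.5} $R$ is Morita equivalent to $T=\widehat{{\rm End}}_\Lambda(V)$, $V=\bigoplus_\alpha V_\alpha$ the sum of the indecomposables, so that being left/right perfect, locally noetherian, locally artinian or locally finite (all Morita invariant) transfers freely between $R$ and $T$. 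Moreover ${\rm Mod}(\mathscr{X})={\rm Mod}(\Lambda${\rm -mod}$)$, one has $\mathscr{X}(-,X)={\rm Hom}_\Lambda(-,X)$ for $X\in\Lambda${\rm -mod}, and every simple left $\Lambda$-module is finitely generated; hence statements (vii)--(x) of the corollary are precisely statements (ii)--(v) of Theorem \ref{2.2}, while (i) is the assertion that $\mathscr{X}$ is of finite representation type.

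With these identifications in place I would argue as follows. Applying Theorem \ref{2.2} to $\mathscr{X}=\Lambda${\rm -mod} yields the equivalence of (i), (vii), (viii), (ix), (x). Applying Theorem \ref{2.1}(1), in which ${\rm res.dim}_{\mathscr{X}}\Lambda${\rm -mod} $=0$ holds automatically, yields (i) $\Leftrightarrow$ (ii). From the converse part of Theorem \ref{2.1}(3) together with \cite[Proposition 4.2]{ausla2} — which applies since ${\rm res.dim}_{\mathscr{X}}\Lambda${\rm -mod} $=0$ — finite representation type forces $T$, hence $R$, to be an artinian ring with ${\rm l.gl.dim}\,T\le 2$, in particular left and right perfect and left and right locally finite; this gives (i) $\Rightarrow$ (iii), (iv), (v). The equivalence (i) $\Leftrightarrow$ (vi) is Auslander's theorem that finite representation type is the same as being left and right pure semisimple \cite{aus}. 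Finally the remaining reverse implications (iii) $\Rightarrow$ (i), (iv) $\Rightarrow$ (i) and (v) $\Rightarrow$ (i) — namely, that any one of the one-sided perfectness or local-finiteness conditions on the functor ring already forces finite representation type — are \cite[Theorem 3.1]{wis}. Together these give the equivalence of all ten statements.

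It remains to justify the reduction to $\Lambda$ left artinian. This is part of the definition of the condition in (ii) and is stated outright in (vii)--(x); it follows from \cite{aus} under (vi) and from the very definition of finite representation type under (i); and under (iii)--(v) it is part of the content of \cite[Theorem 3.1]{wis}. I expect the main obstacle to be precisely this dependence on the classical ring-theoretic equivalences: Theorems \ref{2.1} and \ref{2.2} deliver, self-containedly, the chain (i) $\Leftrightarrow$ (ii) $\Leftrightarrow$ (vii) $\Leftrightarrow$ (viii) $\Leftrightarrow$ (ix) $\Leftrightarrow$ (x) as well as (i) $\Rightarrow$ (iii), (iv), (v), but an implication such as ``$R$ right perfect $\Rightarrow$ $\Lambda$ of finite representation type'' gives no control over ${\rm l.gl.dim}\,R$, so Theorem \ref{2.1}(3) cannot be invoked in that direction; that implication, and likewise (i) $\Leftrightarrow$ (vi), must be quoted from \cite[Theorem 3.1]{wis} and \cite{aus} rather than re-derived here.
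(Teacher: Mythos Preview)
Your proposal is correct, and for the block $({\rm i})\Leftrightarrow({\rm ii})$ and $({\rm i})\Leftrightarrow({\rm vii})$--$({\rm x})$ it coincides with the paper's proof: both specialise Theorems~\ref{2.1} and~\ref{2.2} to $\mathscr{X}=\Lambda\text{-mod}$ with ${\rm res.dim}_{\mathscr{X}}\Lambda\text{-mod}=0$.

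The route diverges on $({\rm iii})$--$({\rm vi})$. You treat the reverse implications $({\rm iii}),({\rm iv}),({\rm v})\Rightarrow({\rm i})$ and the equivalence $({\rm i})\Leftrightarrow({\rm vi})$ as black boxes, quoting \cite[Theorem~3.1]{wis} and \cite{aus} directly. The paper instead routes everything back through Theorem~\ref{2.1}: from $({\rm iii})$ it uses \cite[Theorem]{fu} and \cite[Theorem~2]{hu} to obtain that $\Lambda$ is left artinian and ${\rm l.gl.dim}\,R\le 2$, whence Theorem~\ref{2.1}(1) gives $({\rm iv})$; from $({\rm iv})$ it uses \cite[Theorem~2.4]{f}, \cite[Proposition~1.5]{f} and \cite[Proposition~50.4(2)]{wi} for the same two facts and again applies Theorem~\ref{2.1}(1); from $({\rm v})$ it uses several results in \cite{wi} to get left artinianity and then the argument inside Theorem~\ref{2.1}(2) to reach $({\rm iii})$; and $({\rm i})\Leftrightarrow({\rm vi})$ is obtained via \cite[Theorem]{fu}, \cite[Proposition~53.7]{wi} and Theorem~\ref{2.1}(2),(3) rather than by citing Auslander. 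Your approach is shorter but somewhat circular in spirit, since the point of the corollary is precisely to recover \cite[Theorem~3.1]{ausla2} and \cite[Theorem~3.1]{wis} from Theorems~\ref{2.1} and~\ref{2.2}; the paper's approach makes that derivation explicit. Note also that your closing remark understates what is available: under $({\rm iii})$ one \emph{does} get ${\rm l.gl.dim}\,R\le 2$ (via \cite{fu} and \cite{hu}), so Theorem~\ref{2.1}(1) can be invoked in that direction without appealing to \cite{wis}.
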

 \begin{proof}
$({\rm i}) \Rightarrow ({\rm ii})$. Follows by Theorem \ref{2.1}(1).\\
$({\rm ii}) \Rightarrow ({\rm iii})$. Follows by Lemma \ref{1.5} and the proof of ($({\rm ii}) \Rightarrow ({\rm iii})$) of Theorem \ref{2.1}(1).\\
$({\rm iii}) \Rightarrow ({\rm iv})$. By \cite[Theorem]{fu} and \cite[Theorem 2]{hu}, $\Lambda$ is a left artinian ring and ${\rm l.gl.dim}R \leq 2$. Hence by Lemma \ref{1.5} and Theorem \ref{2.1}(1), the result follows.\\
$({\rm iv}) \Rightarrow ({\rm i})$. Since $R$ is a left locally finite, for each idempotent $e \in R$, $Re$ has finitely length. This implies that by \cite[Theorem 2.4]{f}, $\Lambda$ is a left artinian ring. On the other hand, by using \cite[Proposition 1.5]{f} and \cite[Proposition 50.4(2)]{wi}, we can see that ${\rm l. gl.dim}R \leq 2$. Therefore by Lemma \ref{1.5} and Theorem \ref{2.1}(1), the result follows. \\
$({\rm i}) \Rightarrow ({\rm vi})$. By $({\rm iii})$, $R$ is left perfect and so by \cite[Theorem]{fu}, $\Lambda$ is left pure semisimple. But by Lemma \ref{1.5} and Theorem \ref{2.1}(3), we can see that $R$ is a right locally noetherian ring. Therefore by \cite[Proposition 53.7 ]{wi}, $\Lambda$ is a right pure semisimple ring.\\
$({\rm vi}) \Rightarrow ({\rm v})$. It follows from \cite[Proposition 53.7]{wi}, Lemma \ref{1.5} and Theorem \ref{2.1}(2).\\
$({\rm v})\Rightarrow ({\rm iii})$. By using \cite[Propositions 31.11(4), 35.7, 51.8(2) and 52.1(6)]{wi}, we can see that $\Lambda$ is a left artinian ring. Hence by Lemma \ref{1.5} and the proof of ($({\rm ii}) \Rightarrow ({\rm i})$) of Theorem \ref{2.1}(2), the result follows.\\
$({\rm i}) \Rightarrow ({\rm vii}) \Rightarrow ({\rm viii}) \Rightarrow ({\rm ix}) \Rightarrow ({\rm x}) \Rightarrow ({\rm i})$ follows from Theorem \ref{2.2}.
 \end{proof}

\section{locally finitely presented pure semisimple categories}

In this section, we give a characterization of covariantly finite subcategories $\mathscr{X}$ of $\Lambda$-mod of finite representation type in terms of decomposition properties of $\mathscr{A}={\underrightarrow{\lim}}\mathscr{X}$.  Moreover we generalize and unify the results of Auslander \cite[Proposition 2.1]{aus}, Beligiannis \cite[Theorems 3.1, 4.10 and 4.11]{ab} and Chen \cite[Main Theorem]{che}. \\

Let $\mathscr{A}$ be a locally finitely presented category. A sequence $0 \rightarrow A \overset{f}{\rightarrow} B \overset{g}{\rightarrow} C \rightarrow 0$ in $\mathscr{A}$ (i.e., a pair of maps $f$ and $g$ with $g\circ f=0$) is said to be {\it pure-exact} if the sequence $$0 \rightarrow {\rm Hom}_{\mathscr{A}}(X, A) \overset{{\rm Hom}_{\mathscr{A}}(X, f)}{\rightarrow} {\rm Hom}_{\mathscr{A}}(X, B) \overset{{\rm Hom}_{\mathscr{A}}(X, g)}{\rightarrow} {\rm Hom}_{\mathscr{A}}(X, C) \rightarrow 0$$ is exact for each $X \in {\rm fp}(\mathscr{A})$. The category $\mathscr{A}$ is called {\it pure semisimple} if any pure-exact sequence in $\mathscr{A}$ splits (see \cite[Section 3]{wc}). Note that if $\mathscr{A}$ has products, then $\mathscr{A}$ is pure semisimple if and only if $\mathscr{A}={\rm Add}({\rm fp}(\mathscr{A}))$ (see \cite[Subsection 2.1.1]{ab}).

\begin{The}\label{3.1}
Let $\Lambda$ be a left artinian ring and $\mathscr{X}$ be a covariantly finite subcategory of $\Lambda$-mod. Assume that every simple $\mathscr{X}^{\rm op}$-module is finitely presented and $\mathscr{A}={\underrightarrow{\lim}}\mathscr{X}$. Then the following statements are equivalent.
\begin{itemize}
\item[$({\rm i})$] $\mathscr{X}$ is of finite representation type.
\item[$({\rm ii})$] Every module in $\mathscr{A}$ is a direct sum of finitely generated modules.
\item[$({\rm iii})$] $\mathscr{A}$ is pure semisimple.
\item[$({\rm iv})$] Any family of homomorphisms between indecomposable modules in $\mathscr{X}$ is noetherian.
\item[$({\rm v})$] ${\rm Mod}(\mathscr{X}^{\rm op})$ is locally finite.
\item[$({\rm vi})$] $\mathscr{X}(X,-)$ is finite for each $X \in \mathscr{X}$.
\item[$({\rm vii})$] $\mathscr{X}(S,-)$ is finite for each simple left $\Lambda$-module $S$.
\item[$({\rm viii})$] There is an additive equivalence between $\mathscr{A}$ and the category of projective modules over a semiperfect ring.
\item[$({\rm ix})$] There is an additive equivalence between $\mathscr{X}$ and the category of finitely generated projective modules over a semiperfect ring.
\end{itemize}
If $\Lambda \in \mathscr{X}$, then $({\rm i})$-$({\rm ix})$ are equivalent to
\begin{itemize}
\item[$({\rm x})$] Every module in $\mathscr{A}$ is a direct sum of indecomposable modules.
\end{itemize}
\end{The}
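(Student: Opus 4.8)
The plan is to translate the listed conditions into properties of the functor ring $R$ of $\mathscr{X}$ and to run, on the covariant side, the machinery already used for Theorems~\ref{2.1} and~\ref{2.2}. As in the proof of Theorem~\ref{2.1}, \cite[Theorem~1.4]{wc} shows that $\mathscr{A}=\underrightarrow{\lim}\mathscr{X}$ is locally finitely presented with $\mathrm{fp}(\mathscr{A})=\mathscr{X}$ (since $\mathscr{X}$ is closed under summands) and that $A\mapsto\mathrm{Hom}_{\Lambda}(-,A)|_{\mathscr{X}}$ is an equivalence $\mathscr{A}\simeq\mathrm{Flat}(\mathscr{X})$, while \cite[Proposition~52.5]{wi} together with Lemma~\ref{1.5} yields equivalences $\mathrm{Mod}(\mathscr{X})\simeq R\text{-}\mathrm{Mod}$, $\mathrm{Mod}(\mathscr{X}^{\mathrm{op}})\simeq\mathrm{Mod}\text{-}R$ and $\mathscr{A}\simeq\mathrm{Flat}(R)$ carrying $\mathscr{X}$ onto the finitely generated projective $R$-modules. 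The standing hypothesis that every simple $\mathscr{X}^{\mathrm{op}}$-module is finitely presented is what plays, in the covariant functor category, the role that the existence of almost split sequences plays over an artin algebra and which is unavailable here because $\Lambda$ is only left artinian; in particular it supplies, for free, the conclusion that Theorem~\ref{2.2} had to extract from Lemma~\ref{1.3}.

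First I would prove the cycle $(\mathrm{i})\Rightarrow(\mathrm{iv})\Rightarrow(\mathrm{v})\Rightarrow(\mathrm{vi})\Rightarrow(\mathrm{vii})\Rightarrow(\mathrm{i})$. For $(\mathrm{i})\Rightarrow(\mathrm{iv})$, a Krull--Schmidt subcategory of $\Lambda$-mod of finite type consists of modules of bounded length (since $\Lambda$ is left artinian), so the Harada--Sai lemma kills all long enough composites of non-isomorphisms between indecomposables and every such family is noetherian. For $(\mathrm{iv})\Rightarrow(\mathrm{v})$, by \cite[Proposition~1.11]{ausla2} applied to $\mathscr{X}^{\mathrm{op}}$ it suffices that every simple $\mathscr{X}^{\mathrm{op}}$-module be finitely presented (the hypothesis) and that every nonzero $\mathscr{X}^{\mathrm{op}}$-module have a simple submodule; the latter follows by the method of \cite[Theorem~3.1]{ab}: from a nonzero $\mathscr{X}^{\mathrm{op}}$-module with no simple submodule one builds, using the Yoneda lemma and projectivity of representable covariant functors, a chain $Y_{1}\to Y_{2}\to Y_{3}\to\cdots$ of non-isomorphisms between indecomposables of $\mathscr{X}$ with all composites nonzero, contradicting $(\mathrm{iv})$. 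The steps $(\mathrm{v})\Rightarrow(\mathrm{vi})$ (representable functors are finitely generated) and $(\mathrm{vi})\Rightarrow(\mathrm{vii})$ (a left $\mathscr{X}$-approximation $S\to X_{S}$ of a simple module yields an epimorphism $\mathscr{X}(X_{S},-)\twoheadrightarrow\mathrm{Hom}_{\Lambda}(S,-)|_{\mathscr{X}}$, using covariant finiteness) are routine. For $(\mathrm{vii})\Rightarrow(\mathrm{i})$, over a left artinian ring every nonzero finitely generated module has nonzero socle, so each indecomposable $X\in\mathscr{X}$ has a simple submodule $S$, i.e. $\mathrm{Hom}_{\Lambda}(S,-)|_{\mathscr{X}}(X)\neq 0$; a finite $\mathscr{X}^{\mathrm{op}}$-module has a finite composition series, the simple $\mathscr{X}^{\mathrm{op}}$-modules are supported at single indecomposables, and there are only finitely many simple $\Lambda$-modules, so $\mathscr{X}$ has only finitely many indecomposables (this is the covariant counterpart of $(\mathrm{c})\Rightarrow(\mathrm{d})$ of \cite[Proposition~3.1]{ausla2}).

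Next I would hook in the remaining conditions. For $(\mathrm{i})\Leftrightarrow(\mathrm{ix})$: if $\mathscr{X}=\mathrm{add}(V)$ then $\Gamma=\mathrm{End}_{\Lambda}(V)$ is semiprimary by the Harada--Sai lemma, hence semiperfect, and $\mathrm{Hom}_{\Lambda}(V,-)$ is an equivalence $\mathscr{X}\xrightarrow{\ \sim\ }\mathrm{proj}(\Gamma)$; conversely $\mathrm{proj}$ of a semiperfect ring has only finitely many indecomposables. For $(\mathrm{i})\Rightarrow(\mathrm{viii})$: $\Gamma$ being semiprimary is left perfect, so $\mathscr{A}\simeq\underrightarrow{\lim}\mathrm{proj}(\Gamma)=\mathrm{Flat}(\Gamma)=\mathrm{Proj}(\Gamma)$; and $(\mathrm{viii})\Rightarrow(\mathrm{i})$ because over a semiperfect ring every projective module is a direct sum of the finitely many indecomposable projectives, hence a direct limit of finitely generated ones, so every finitely presented object of the projective category is finitely generated and $\mathscr{X}=\mathrm{fp}(\mathscr{A})$ has only finitely many indecomposables. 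Also $(\mathrm{i})\Rightarrow(\mathrm{ii})$ is immediate from $\mathscr{A}\simeq\mathrm{Proj}(\Gamma)$, and $(\mathrm{ii})\Leftrightarrow(\mathrm{iii})$ because a pure-exact sequence with finitely presented quotient splits while, conversely, every object of a locally finitely presented category is a pure epimorphic image of a coproduct of finitely presented objects and a direct summand of a direct sum of finite-length modules is again one. Finally, when $\Lambda\in\mathscr{X}$, $(\mathrm{ii})\Rightarrow(\mathrm{x})$ is trivial, and $(\mathrm{x})\Rightarrow(\mathrm{ii})$ follows by the argument of \cite[Theorem~4.10]{ab} and \cite[Main Theorem]{che}: since $\mathscr{A}$ then contains $\mathrm{Proj}(\Lambda)$, any decomposition of an object of $\mathscr{A}$ into indecomposables forces the summands to be finitely generated.

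The main obstacle is the implication $(\mathrm{iii})\Rightarrow(\mathrm{i})$, i.e. the passage from the purity/decomposition conditions $(\mathrm{ii})$, $(\mathrm{iii})$, $(\mathrm{viii})$, $(\mathrm{x})$ back to finite type: this is a statement of ``pure semisimplicity'' type, which holds here only because of the locally finitely presented structure of $\mathscr{A}$ and the hypothesis on simple $\mathscr{X}^{\mathrm{op}}$-modules, and it requires a careful control --- through the equivalences of \cite{wc} and \cite{wi} and the functor-ring arguments of \cite[\S3]{wis} (compare \cite[Theorem~3.1]{ausla2}) --- of how the various direct-sum decompositions interact with the perfectness and semiperfectness of $R$. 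A subsidiary technical point is that the Yoneda argument in $(\mathrm{iv})\Rightarrow(\mathrm{v})$ must be carried out in the covariant functor category $\mathrm{Mod}(\mathscr{X}^{\mathrm{op}})$, where the hypothesis replaces the missing almost split sequences.
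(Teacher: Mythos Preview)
Your cycle $(\mathrm{i})\Rightarrow(\mathrm{iv})\Rightarrow(\mathrm{v})\Rightarrow(\mathrm{vi})\Rightarrow(\mathrm{vii})\Rightarrow(\mathrm{i})$ and your treatment of $(\mathrm{viii})$ and $(\mathrm{ix})$ match the paper's argument essentially step for step, and the direction $(\mathrm{i})\Rightarrow(\mathrm{ii})\Rightarrow(\mathrm{iii})$ is also in line with the paper (the paper uses covariant finiteness and \cite[Theorem~4.2]{wc} to get products in $\mathscr{A}$, so that pure semisimplicity is equivalent to $\mathscr{A}=\mathrm{Add}(\mathscr{X})$; you should make that explicit). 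However, two of your implications are not actually proved.

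\textbf{The step $(\mathrm{iii})\Rightarrow(\mathrm{i})$.} You correctly flag this as the crux, but you leave it as a vague appeal to functor-ring perfectness arguments in the style of \cite{wis}. In fact no such work is needed: the paper simply proves $(\mathrm{iii})\Rightarrow(\mathrm{iv})$ by citing \cite[Theorem~3.2]{wc}. Crawley--Boevey's theorem characterizes pure semisimplicity of a locally finitely presented category directly in terms of the noetherian condition on chains of non-isomorphisms between indecomposable finitely presented objects, so the implication is immediate once one knows $\mathscr{A}$ is locally finitely presented with $\mathrm{fp}(\mathscr{A})=\mathscr{X}$. You had already set this up in your first paragraph; you just did not notice that the reference closes the loop.

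\textbf{The step $(\mathrm{x})\Rightarrow(\mathrm{ii})$.} Your assertion that ``since $\mathscr{A}$ contains $\mathrm{Proj}(\Lambda)$, any decomposition into indecomposables forces the summands to be finitely generated'' is not justified, and this is not how \cite{ab} or \cite{che} argue either. The paper does \emph{not} prove $(\mathrm{x})\Rightarrow(\mathrm{ii})$; it proves $(\mathrm{x})\Rightarrow(\mathrm{iii})$. Given a pure-exact sequence $0\to A\to B\to C\to 0$ in $\mathscr{A}$, the hypothesis $\Lambda\in\mathscr{X}$ makes it an honest short exact sequence in $\Lambda\text{-Mod}$. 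One then shows it is pure-exact in $\Lambda\text{-Mod}$: any map $M\to C$ from a finitely generated $\Lambda$-module factors, by \cite[Lemma~4.1]{wc}, through some $Y\in\mathscr{X}$, and $Y\to C$ lifts to $B$ by pure-exactness in $\mathscr{A}$. Finally, since $\mathscr{X}$ is covariantly finite, $\mathscr{A}$ is closed under products and one may invoke \cite[Corollary~2.7 and Proposition~3.11]{sp}: in such a subcategory, if every object decomposes into indecomposables then every pure-exact sequence (in $\Lambda\text{-Mod}$) with terms in $\mathscr{A}$ splits. This is where the hypothesis $\Lambda\in\mathscr{X}$ is genuinely used, and your sketch does not capture it.
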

\begin{proof}
By the proof of $({\rm i}) \Rightarrow ({\rm ii})$ of Theorem \ref{2.1}(2), $\mathscr{A}$ is a locally presented category with ${\rm fp}(\mathscr{A})=\mathscr{X}$ and the functor $\mathbf{H}:  \mathscr{A} \rightarrow {\rm Mod}(\mathscr{X})$ via $A \mapsto \mathscr{X}(-,A)$ induces an equivalence between the category $\mathscr{A}$ and the category ${\rm Flat}(\mathscr{X})$.\\
$({\rm i}) \Rightarrow ({\rm ii})$.  Let $\lbrace V_1, \cdots, V_n\rbrace$ be a complete set of representative of the isomorphic classes of indecomposable modules in $\mathscr{X}$ and $T={\rm End}_{\Lambda}(V)$, where $V=\bigoplus_{i=1}^nV_i$. Since $V$ is finitely generated, by \cite[Proposition 32.4]{wi}, $T$ is a semiprimary ring and so it is a left perfect ring. We know that by Lemma \ref{1.5}, $T$ is Morita equivalent to the functor ring $R$ of $\mathscr{X}$. Hence by \cite[Theorem 3.4]{zn}, there is an equivalence $\mathbf{G}: {\rm Proj}(T) \rightarrow {\rm Proj}(R)$ which preserves and reflects direct sums and finitely generated modules. Also $R$ is a left perfect ring. This implies that every direct limit of finitely generated projective unitary left $R$-modules is projective. So by \cite[Proposition 52.5]{wi}, there is an equivalence $\textbf{F}: {\rm Proj}(R) \longrightarrow {\rm Flat}(\mathscr{X})$ which preserves direct sums and finitely generated modules. It is easy to check that the composition of the following functors is an equivalence which preserves direct sums and finitely generated modules
$${\rm Add}(V) \overset{{\rm Hom}_{\Lambda}(V,-)}{\rightarrow} {\rm Proj}(T) \overset{\mathbf{G}}{\rightarrow} {\rm Proj}(R) \overset{\mathbf{F}}{\rightarrow} {\rm Flat}(\mathscr{X}) \overset{\mathbf{H}^{-1}}{\rightarrow} \mathscr{A}$$
Therefore every module in $\mathscr{A}$ is a direct sum of finitely generated indecomposable modules. \\
$({\rm ii}) \Rightarrow ({\rm iii})$. Since $\mathscr{X}$ is covariantly finite, by \cite[Theorem 4.2]{wc},  $\mathscr{A}$ is a locally finitely presented category with products. Since ${\rm Add}(\mathscr{X}) \subseteq \mathscr{A}$, it is enough to show that $\mathscr{A} \subseteq {\rm Add}(\mathscr{X})$. Assume that $M$ be a module in $\mathscr{A}$. Then $M=\bigoplus_{i\in I}M_i$, where $I$ is a set and each $M_i$ is a finitely generated left $\Lambda$-module. Since by \cite[Lemma 3.10]{sp}, $\mathscr{A}$ is closed under direct summands,  each $M_i \in \mathscr{A}$. But since by the proof of \cite[Theorem 4.1]{wc}, ${\rm Hom}_{\Lambda}(M_i, -): \mathscr{A} \rightarrow \mathfrak{Ab}$ preserves direct limits, each $M_i \in \mathscr{X}$. Therefore $M \in {\rm Add}(\mathscr{X})$ and the result follows. \\
$({\rm iii}) \Rightarrow ({\rm iv})$. It follows from \cite[Theorem 3.2]{wc}. \\
$({\rm iv}) \Rightarrow ({\rm v})$. By the similar argument in the proof of $({\rm ii})\Rightarrow ({\rm iii})$ of Theorem \ref{2.2}, we can see that every non-zero $\mathscr{X}^{\rm op}$-module has a simple submodule. The assumption yields ${\rm Mod}(\mathscr{X}^{\rm op})$ is locally finite.\\
$({\rm v}) \Rightarrow ({\rm vi})$ is clear.\\
$({\rm vi}) \Rightarrow ({\rm vii})$. It follows from the fact that every simple left $\Lambda$-module has a left $\mathscr{X}$-approximation.\\
$({\rm vii}) \Rightarrow ({\rm i})$. One can prove this implication by the technique used in the proof of $(c) \Rightarrow (d)$ of \cite[Proposition 3.1]{ausla2}.\\
$({\rm i}) \Leftrightarrow ({\rm viii}) \Leftrightarrow ({\rm ix})$. It follows from the fact that the covariant functor $\widehat{{\Hom}}_{\Lambda}(V,-)$ is an additive equivalence between the full subcategory ${\rm Add}(V)$ of $\Lambda$-Mod and the full subcategory Proj$(T)$ of $T$Mod, where $T=\widehat{{\rm End}}_{\Lambda}(V)$, $V=\bigoplus_{\alpha \in J}V_{\alpha}$ and $\lbrace V_{\alpha}~|~\alpha \in J\rbrace$ is a complete set of representative of the isomorphic classes of indecomposable modules in $\mathscr{X}$. \\
$({\rm i}) \Rightarrow ({\rm x})$ is clear.\\
$({\rm x}) \Rightarrow ({\rm i})$. Now assume that $\Lambda \in \mathscr{X}$ and every module in $\mathscr{A}$ is a direct sum of indecomposable modules. We show that $\mathscr{A}$ is pure semisimple. It is enough to show that every pure-exact sequence in $\mathscr{A}$ splits. Consider the pure-exact sequence $0 \rightarrow A \overset{f}{\rightarrow} B \overset{g}{\rightarrow} C \rightarrow 0$ in $\mathscr{A}$. Then $$0 \rightarrow {\rm Hom}_{\Lambda}(X,A) \overset{{\rm Hom}_{\Lambda}(X,f)}{\rightarrow} {\rm Hom}_{\Lambda}(X,B) \overset{{\rm Hom}_{\Lambda}(X,g)}{\rightarrow} {\rm Hom}_{\Lambda}(X,C) \rightarrow 0$$ is exact for each $X \in \mathscr{X}$. Hence the sequence $0 \rightarrow A \overset{f}{\rightarrow} B \overset{g}{\rightarrow} C \rightarrow 0$ is a short exact sequence because $\Lambda \in \mathscr{X}$.  But since every module in $\mathscr{A}$ is a direct sum of indecomposable modules and $\mathscr{X}$ is a covariantly finite subcategory of $\Lambda$-mod, by \cite[Corollary 2.7 and Proposition 3.11]{sp}, it is enough to show that the short exact sequence $0 \rightarrow A \overset{f}{\rightarrow} B \overset{g}{\rightarrow} C \rightarrow 0$ is a pure exact sequence in $\Lambda$-Mod. Let $M$ be a finitely generated left $\Lambda$-module and $\alpha: M \rightarrow C$ be a homomorphism. Since $C \in \mathscr{A}$ and $\Lambda$ is a left artinian ring, by \cite[Lemma 4.1]{wc}, $\alpha$ factors through a module in $\mathscr{X}$. There are homomorphisms $\beta: M \rightarrow Y$ and $\gamma: Y \rightarrow C$ such that $\alpha = \gamma \circ \beta$, where $Y \in \mathscr{X}$. Since  $$0 \rightarrow {\rm Hom}_{\Lambda}(Y,A) \overset{{\rm Hom}_{\Lambda}(Y,f)}{\rightarrow} {\rm Hom}_{\Lambda}(Y,B) \overset{{\rm Hom}_{\Lambda}(Y,g)}{\rightarrow} {\rm Hom}_{\Lambda}(Y,C) \rightarrow 0$$ is exact, $\gamma$ factors through $g$ and so $\alpha$ factors through $g$. Therefore the sequence $0 \rightarrow A \overset{f}{\rightarrow} B \overset{g}{\rightarrow} C \rightarrow 0$ is a pure exact sequence in $\Lambda$-Mod and the result follows.
\end{proof}

\begin{Cor}{\rm (See \cite[Proposition 2.1]{aus})}
Let $\Lambda$ be a left artinian ring and assume that every simple $(\Lambda$-{\rm mod}$)^{\rm op}$-module is finitely presented. Then $\Lambda$ is of finite representation type if and only if every left $\Lambda$-module is a direct sum of finitely generated left $\Lambda$-modules.
\end{Cor}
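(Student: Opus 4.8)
The plan is to apply Theorem \ref{3.1} to the largest admissible choice of subcategory, namely $\mathscr{X}=\Lambda$-mod itself. First I would check that this $\mathscr{X}$ meets all the standing hypotheses of the paper: it is an additive full subcategory of $\Lambda$-mod closed under direct summands and isomorphisms, it contains the zero modules, and it contains $\Lambda$. It is trivially a covariantly finite subcategory of $\Lambda$-mod, since for every $M\in\Lambda$-mod the identity $M\to M$ is a left $\mathscr{X}$-approximation. Moreover the hypothesis of the corollary, that every simple $(\Lambda\text{-mod})^{\rm op}$-module is finitely presented, is precisely the hypothesis on simple $\mathscr{X}^{\rm op}$-modules required by Theorem \ref{3.1}.

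Next I would identify the closure $\mathscr{A}={\underrightarrow{\lim}}\,\mathscr{X}$. Since $\Lambda$ is left artinian, hence left noetherian, finitely presented and finitely generated left $\Lambda$-modules coincide, so $\Lambda$-Mod is locally finitely presented with ${\rm fp}(\Lambda\text{-Mod})=\Lambda\text{-mod}$, and therefore $\mathscr{A}={\underrightarrow{\lim}}(\Lambda\text{-mod})=\Lambda\text{-Mod}$. Under this identification, condition $({\rm i})$ of Theorem \ref{3.1} --- that $\mathscr{X}=\Lambda$-mod has, up to isomorphism, only finitely many indecomposable objects --- is exactly the assertion that $\Lambda$ is of finite representation type, while condition $({\rm ii})$ --- that every object of $\mathscr{A}=\Lambda\text{-Mod}$ is a direct sum of finitely generated modules --- is exactly the assertion that every left $\Lambda$-module is a direct sum of finitely generated left $\Lambda$-modules.

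The corollary then follows immediately from the equivalence $({\rm i})\Leftrightarrow({\rm ii})$ of Theorem \ref{3.1}; since $\Lambda\in\mathscr{X}$ one could equally well invoke $({\rm i})\Leftrightarrow({\rm x})$. I expect no real obstacle here: the entire substantive content is carried by Theorem \ref{3.1}, and the only work is the routine verification of the standing hypotheses for $\mathscr{X}=\Lambda$-mod together with the standard identification of ${\underrightarrow{\lim}}(\Lambda\text{-mod})$ with $\Lambda$-Mod over a left noetherian ring. If anything is delicate it is only making sure the finitely presented objects of $\Lambda$-Mod are the finitely generated modules, which is immediate from left noetherianity.
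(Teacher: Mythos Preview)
Your proposal is correct and is exactly the intended argument: the paper gives no separate proof of this corollary, treating it as an immediate specialization of Theorem~\ref{3.1} with $\mathscr{X}=\Lambda\text{-mod}$, and your verification of the hypotheses and identification $\mathscr{A}={\underrightarrow{\lim}}(\Lambda\text{-mod})=\Lambda\text{-Mod}$ is precisely what is needed.
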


Let $\Lambda$ be an artin algebra and assume that $\mathscr{X}$ is a functorially finite subcategory of $\Lambda$-mod.  From  \cite[Theorem 2.3]{au} and \cite[Proposition 3.2]{auu} we can see that all simple functors in Mod$(\mathscr{X})$ and Mod$(\mathscr{X}^{\rm {op}})$ are finitely presented. So as a consequence of Theorem \ref{3.1} we have the following result.

\begin{Cor}\label{3.2}
Let $\Lambda$ be an artin algebra,  $\mathscr{X}$ be a functorially finite subcategory of $\Lambda$-mod and let $\mathscr{A}={\underrightarrow{\lim}}\mathscr{X}$. Then the following statements are equivalent.
\begin{itemize}
\item[$({\rm i})$] $\mathscr{X}$ is of finite representation type.
\item[$({\rm ii})$] Every module in $\mathscr{A}$ is a direct sum of finitely generated modules.
\item[$({\rm iii})$] $\mathscr{A}$ is pure semisimple.
\item[$({\rm iv})$] Any family of homomorphisms between indecomposable modules in $\mathscr{X}$ is noetherian.
\item[$({\rm v})$] ${\rm Mod}(\mathscr{X})$ is locally finite.
\item[$({\rm vi})$] $\mathscr{X}(X,-)$ is finite for each $X \in \mathscr{X}$.
\item[$({\rm vii})$] $\mathscr{X}(S,-)$ is finite for each simple left $\Lambda$-module $S$.
\item[$({\rm viii})$] There is an additive equivalence between $\mathscr{A}$ and the category of projective modules over an artin algebra.
\item[$({\rm ix})$] There is an additive equivalence between $\mathscr{X}$ and the category of finitely generated projective modules over an artin algebra.
\end{itemize}
If $\Lambda \in \mathscr{X}$, then $({\rm i})$-$({\rm ix})$ are equivalent to
\begin{itemize}
\item[$({\rm x})$] Every module in $\mathscr{A}$ is a direct sum of indecomposable modules.
\end{itemize}
\end{Cor}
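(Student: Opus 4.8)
The plan is to reduce the whole statement to Theorem~\ref{3.1}, applied once directly and once through the standard duality, and then to refine two of its conclusions. First I would check that the hypotheses of Theorem~\ref{3.1} hold here: an artin algebra is left artinian, a functorially finite subcategory is covariantly finite, and --- this is exactly why the extra assumption of Theorem~\ref{3.1} comes for free --- by \cite[Theorem 2.3]{au} and \cite[Proposition 3.2]{auu} every simple object of ${\rm Mod}(\mathscr{X})$ and of ${\rm Mod}(\mathscr{X}^{\rm op})$ is finitely presented. Hence Theorem~\ref{3.1} applies verbatim and yields the equivalence of $({\rm i})$, $({\rm ii})$, $({\rm iii})$, $({\rm iv})$, the assertion ``${\rm Mod}(\mathscr{X}^{\rm op})$ is locally finite'', $({\rm vi})$, $({\rm vii})$, and the variants of $({\rm viii})$ and $({\rm ix})$ with ``semiperfect ring'' in place of ``artin algebra'', together with $({\rm x})$ when $\Lambda\in\mathscr{X}$.

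Two upgrades remain. For $({\rm viii})$ and $({\rm ix})$: in the proof of $({\rm i})\Rightarrow({\rm ii})$ of Theorem~\ref{3.1} the ring that appears is $T={\rm End}_{\Lambda}(V)$, where $V$ is the direct sum of the (finitely many) indecomposable modules of $\mathscr{X}$, and the displayed composite of equivalences there identifies $\mathscr{A}$ with ${\rm Proj}(T)$ and $\mathscr{X}$ with ${\rm proj}(T)$. Since $\Lambda$ is an artin algebra and $V$ is finitely generated, $T$ is again an artin algebra, being a module-finite algebra over the centre of $\Lambda$; this gives $({\rm i})\Rightarrow({\rm viii}),({\rm ix})$ with ``artin algebra''. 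The reverse implications $({\rm viii})\Rightarrow({\rm i})$ and $({\rm ix})\Rightarrow({\rm i})$ need nothing new, since an artin algebra is semiperfect and Theorem~\ref{3.1} applies.

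For $({\rm v})$ I would invoke Theorem~\ref{3.1} a second time, now over the opposite algebra. Let $D$ be the usual duality between $\Lambda$-mod and $\Lambda^{\rm op}$-mod. Then $D\mathscr{X}$ is a functorially finite --- in particular covariantly finite --- subcategory of $\Lambda^{\rm op}$-mod that is closed under direct summands, its simple $(D\mathscr{X})^{\rm op}$-modules are finitely presented by \cite[Proposition 3.2]{auu}, and $D$ restricts to a contravariant equivalence $\mathscr{X}\simeq D\mathscr{X}$; composing with $D$ thus gives an equivalence ${\rm Mod}\big((D\mathscr{X})^{\rm op}\big)\simeq {\rm Mod}(\mathscr{X})$ and a bijection between the indecomposables of $D\mathscr{X}$ and those of $\mathscr{X}$. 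Applying Theorem~\ref{3.1} to $\Lambda^{\rm op}$ and $D\mathscr{X}$ then shows that $\mathscr{X}$ is of finite type if and only if $D\mathscr{X}$ is of finite type if and only if ${\rm Mod}\big((D\mathscr{X})^{\rm op}\big)$ is locally finite if and only if ${\rm Mod}(\mathscr{X})$ is locally finite, which adds $({\rm v})$ to the list. The only slightly delicate point in the whole argument is the bookkeeping in this last step --- tracking which of ${\rm Mod}(\mathscr{X})$, ${\rm Mod}(\mathscr{X}^{\rm op})$, ${\rm Mod}(D\mathscr{X})$, ${\rm Mod}\big((D\mathscr{X})^{\rm op}\big)$ corresponds to which under a contravariant equivalence --- but it involves no new idea.
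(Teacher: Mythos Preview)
Your proposal is correct and follows the paper's own route: the paper's entire proof is the sentence preceding the corollary, which invokes \cite[Theorem 2.3]{au} and \cite[Proposition 3.2]{auu} to verify the finitely-presented-simples hypothesis and then declares the result a consequence of Theorem~\ref{3.1}. You do exactly this, but you go further than the paper by explicitly handling the three points where Corollary~\ref{3.2} differs from Theorem~\ref{3.1} --- the replacement of ${\rm Mod}(\mathscr{X}^{\rm op})$ by ${\rm Mod}(\mathscr{X})$ in $({\rm v})$, and of ``semiperfect ring'' by ``artin algebra'' in $({\rm viii})$ and $({\rm ix})$ --- which the paper leaves to the reader. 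Your treatment of $({\rm viii})$ and $({\rm ix})$ via $T={\rm End}_\Lambda(V)$ being module-finite over the centre is the standard and expected argument; your duality argument for $({\rm v})$, applying Theorem~\ref{3.1} to $D\mathscr{X}\subseteq\Lambda^{\rm op}\text{-mod}$ and using that precomposition with the contravariant equivalence $D$ identifies ${\rm Mod}\big((D\mathscr{X})^{\rm op}\big)$ with ${\rm Mod}(\mathscr{X})$, is clean and correct, and is presumably what the authors had in mind but did not write.
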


Krause and Solberg in \cite[Corollary 2.6]{ks} showed that if $\Lambda$ is an artin algebra and $\mathscr{X}$ is a contravariantly finitely resolving subcategory of $\Lambda$-mod, then $\mathscr{X}$ is covariantly finite. Hence as a consequence of Theorem \ref{3.1} we have the following result.

\begin{Cor}{\rm (See \cite[Theorem 3.1]{ab})}
Let $\Lambda$ be an artin algebra, $\mathscr{X}$ be a contravariantly finite resolving subcategory of $\Lambda$-{\rm mod} and $\mathscr{A}={\underrightarrow{\lim}}\mathscr{X}$. Then the following statements are equivalent.
\begin{itemize}
\item[$({\rm i})$] $\mathscr{X}$ is of finite representation type.
\item[$({\rm ii})$] Every module in $\mathscr{A}$ is a direct sum of finitely generated modules.
\item[$({\rm iii})$] Every module in $\mathscr{A}$ is a direct sum of indecomposable modules.
\item[$({\rm iv})$] ${\rm Mod}(\mathscr{X})$ is locally finite.
\item[$({\rm v})$] There is an additive equivalence between $\mathscr{A}$ and the category of projective modules over an artin algebra.
\item[$({\rm vi})$] There is an additive equivalence between $\mathscr{X}$ and the category of finitely generated  projective modules over an artin algebra.
\end{itemize}
\end{Cor}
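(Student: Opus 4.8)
The plan is to deduce this corollary directly from Theorem~\ref{3.1} by verifying its hypotheses for a contravariantly finite resolving subcategory $\mathscr{X}$ of $\Lambda$-mod over an artin algebra $\Lambda$. First I would recall, as noted immediately before the statement, the result of Krause and Solberg \cite[Corollary 2.6]{ks}: since $\Lambda$ is an artin algebra and $\mathscr{X}$ is contravariantly finite and resolving, $\mathscr{X}$ is in fact \emph{covariantly finite} as well, hence functorially finite. This is the key observation that puts us in the setting of Corollary~\ref{3.2} (or directly Theorem~\ref{3.1}). Next I would invoke the fact recorded before Corollary~\ref{3.2}---via \cite[Theorem 2.3]{au} and \cite[Proposition 3.2]{auu}---that for a functorially finite subcategory $\mathscr{X}$ of $\Lambda$-mod over an artin algebra, all simple functors in both $\mathrm{Mod}(\mathscr{X})$ and $\mathrm{Mod}(\mathscr{X}^{\mathrm{op}})$ are finitely presented. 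In particular the running hypothesis of Theorem~\ref{3.1}, that every simple $\mathscr{X}^{\mathrm{op}}$-module is finitely presented, is satisfied.

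With these two facts in hand, the corollary is essentially a restriction of Theorem~\ref{3.1} (equivalently Corollary~\ref{3.2}) to the conditions listed. Conditions $(\mathrm{i})$, $(\mathrm{ii})$, $(\mathrm{iv})$, $(\mathrm{v})$, $(\mathrm{vi})$ of the corollary correspond respectively to $(\mathrm{i})$, $(\mathrm{ii})$, $(\mathrm{v})$, $(\mathrm{viii})$, $(\mathrm{ix})$ of Theorem~\ref{3.1}; one small point is that $(\mathrm{v})$ of Theorem~\ref{3.1} mentions $\mathrm{Mod}(\mathscr{X}^{\mathrm{op}})$ while the corollary states $\mathrm{Mod}(\mathscr{X})$, but over an artin algebra with $\mathscr{X}$ functorially finite, the functor-category analogue of \cite[Theorem 2.3]{au}/\cite[Proposition 3.2]{auu} gives local finiteness on both sides simultaneously, so these are interchangeable here. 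For condition $(\mathrm{iii})$ of the corollary (every module in $\mathscr{A}$ is a direct sum of indecomposables) I would note that a contravariantly finite resolving subcategory automatically contains $\Lambda$, so we are in the case $\Lambda \in \mathscr{X}$ of Theorem~\ref{3.1}, and condition $(\mathrm{x})$ there applies: $(\mathrm{iii})$ of the corollary is exactly $(\mathrm{x})$ of Theorem~\ref{3.1}, which is equivalent to all the others. Finally, "semiperfect ring" in Theorem~\ref{3.1} specializes to "artin algebra" in the corollary because when $\mathscr{X}$ has finitely many indecomposables $V_1,\dots,V_n$, the endomorphism ring $T=\mathrm{End}_{\Lambda}(\bigoplus V_i)$ is module-finite over the (commutative artinian) center of $\Lambda$, hence an artin algebra, as already used in the proof of $(\mathrm{i})\Rightarrow(\mathrm{ii})$ of Theorem~\ref{3.1}.

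I do not expect a genuine obstacle here; the content of the corollary is entirely absorbed by Theorem~\ref{3.1} together with the Krause--Solberg reduction. The only point requiring a sentence of care is the bookkeeping between $\mathrm{Mod}(\mathscr{X})$ and $\mathrm{Mod}(\mathscr{X}^{\mathrm{op}})$ and the identification "semiperfect $\rightsquigarrow$ artin algebra," both of which are standard in the artin algebra setting. Thus the proof is a short deduction: apply \cite[Corollary 2.6]{ks} to get functorial finiteness, apply \cite[Theorem 2.3]{au} and \cite[Proposition 3.2]{auu} to verify the finite-presentation hypothesis, then quote Theorem~\ref{3.1} (using that $\Lambda \in \mathscr{X}$, so clause $(\mathrm{x})$ is available) and read off the listed equivalences.
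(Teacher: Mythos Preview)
Your proposal is correct and follows essentially the same approach as the paper: the paper's entire proof consists of the sentence preceding the corollary, namely that Krause--Solberg \cite[Corollary 2.6]{ks} makes $\mathscr{X}$ covariantly finite, after which the result is read off from Theorem~\ref{3.1}. Your write-up is in fact more careful than the paper's, explicitly matching the conditions, noting that $\Lambda\in\mathscr{X}$ activates clause~$(\mathrm{x})$, and flagging the $\mathrm{Mod}(\mathscr{X})$ versus $\mathrm{Mod}(\mathscr{X}^{\mathrm{op}})$ discrepancy and the ``semiperfect $\rightsquigarrow$ artin algebra'' specialization---points the paper leaves implicit (the same switch to $\mathrm{Mod}(\mathscr{X})$ already occurs without comment in Corollary~\ref{3.2}).
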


\subsection{Gorenstein projective modules}
An exact sequence of projective (resp., injective) left $\Lambda$-modules $\mathbf{X}^\bullet:= \cdots \rightarrow X^{i-1} \rightarrow X^i \rightarrow X^{i+1} \rightarrow \cdots$  is called {\it totally-acyclic} if the Hom-complex ${\rm Hom}_{\Lambda}(\mathbf{X}^\bullet, P)$ (resp., ${\rm Hom}_{\Lambda}(I, \mathbf{X}^\bullet)$) is exact for each $P \in {\rm Proj}(\Lambda)$ (resp., $I \in {\rm Inj}(\Lambda)$). A left $\Lambda$-module $M$ is called {\it Gorenstein projective} (resp., {\it Gorenstein injective}) if there is a totally-acyclic complex $\mathbf{X}^\bullet$ of projective (resp., injective) modules such that ${\rm Coker}(X^{-1} \rightarrow X^0) \cong M$ (resp., ${\rm Ker}(X^0 \rightarrow X^1) \cong M$).  Moreover we denote by ${\rm GProj}(\Lambda)$ (resp., ${\rm GInj}(\Lambda)$) the full subcategory of $\Lambda$-Mod consisting of all the Gorenstein projective (resp., Gorenstein injective) left $\Lambda$-modules. A left $\Lambda$-module $M$ is called {\it finitely generated Gorenstein projective} if it is finitely generated and Gorenstein projective and we denote by  ${\rm Gproj}(\Lambda)$ the full subcategory of $\Lambda$-mod consisting of all finitely generated Gorenstein projective left $\Lambda$-modules (see \cite[Definition 2.1]{ho}).  Set ${({\rm GProj}(\Lambda))}^\bot= \lbrace X \in \Lambda${\rm -Mod}$~|~ {\rm Ext}_{\Lambda}^n(M,X)=0 ~\forall n \in {\Bbb{N}}, ~ \forall M \in {\rm GProj}(\Lambda)\rbrace$ and $^\bot{({\rm GInj}(\Lambda))}= \lbrace X \in \Lambda${\rm -Mod}$~|~ {\rm Ext}_{\Lambda}^n(X,M)=0 ~\forall n \in {\Bbb{N}}, ~ \forall M \in {\rm GInj}(\Lambda)\rbrace$. We recall from \cite{ab} that an artin algebra $\Lambda$ is called {\it virtually Gorenstein} if ${({\rm GProj}(\Lambda))}^\bot = {^\bot{({\rm GInj}(\Lambda))}}$.\\

Let $\Lambda$ be an artin algebra. Then by \cite[Theorem 2.5]{ho}, the full subcategory ${\rm Gproj}(\Lambda)$ of $\Lambda$-mod is an additive resolving subcategory of $\Lambda$-mod which is closed under direct summands. Also by \cite[Remark 4.6]{ab}, it is a functorially finite subcategory of $\Lambda$-mod when $\Lambda$ is a virtually Gorenstein artin algebra. Moreover $\Lambda$ is a virtually Gorenstein artin algebra if and only if ${\rm GProj}(\Lambda)={\underrightarrow{\lim}}{\rm Gproj}(\Lambda)$. In this case ${\rm GProj}(\Lambda)$ is a locally finitely presented category with products (see \cite[Remark 4.6]{ab}). \\

As a consequence of Theorem \ref{3.1} we have the following result.

\begin{Cor}{\rm (See \cite[Main Theorem]{che} and \cite[Theorems 4.10 and 4.11]{ab})}\label{3.3}
Let $\Lambda$ be a virtually Gorenstein artin algebra. Then the following statements are equivalent.
\begin{itemize}
\item[$({\rm i})$] $\Lambda$ is of finite CM-type.
\item[$({\rm ii})$] Every Gorenstein projective left $\Lambda$-module is a direct sum of finitely generated module.
\item[$({\rm iii})$] Every Gorenstein projective left $\Lambda$-module is a direct sum of indecomposable module.
\item[$({\rm iv})$] There is an additive equivalence between ${\rm GProj}(\Lambda)$ and the category of projective modules over an artin algebra.
\item[$({\rm v})$] There is an additive equivalence between ${\rm Gproj}(\Lambda)$ and the category of finitely generated projective modules over an artin algebra.
\end{itemize}
\end{Cor}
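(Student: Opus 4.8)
The plan is to apply Corollary \ref{3.2} (equivalently Theorem \ref{3.1}) to the subcategory $\mathscr{X}={\rm Gproj}(\Lambda)$ of $\Lambda$-mod. First I would verify the hypotheses. Since $\Lambda$ is a virtually Gorenstein artin algebra, the subcategory ${\rm Gproj}(\Lambda)$ is resolving and closed under direct summands by \cite[Theorem 2.5]{ho}, and it is a functorially finite subcategory of $\Lambda$-mod by \cite[Remark 4.6]{ab}; in particular it is covariantly finite and $\Lambda\in{\rm Gproj}(\Lambda)$. Functorial finiteness also guarantees, via \cite[Theorem 2.3]{au} and \cite[Proposition 3.2]{auu}, that every simple object of ${\rm Mod}({\rm Gproj}(\Lambda))$ and of ${\rm Mod}({\rm Gproj}(\Lambda)^{\rm op})$ is finitely presented, so the running hypothesis of Theorem \ref{3.1} is satisfied.

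Next I would identify the ambient category $\mathscr{A}=\underrightarrow{\lim}\,{\rm Gproj}(\Lambda)$. By \cite[Remark 4.6]{ab}, the virtually Gorenstein condition on $\Lambda$ is precisely the statement ${\rm GProj}(\Lambda)=\underrightarrow{\lim}\,{\rm Gproj}(\Lambda)$, so $\mathscr{A}={\rm GProj}(\Lambda)$, a locally finitely presented category with products and ${\rm fp}(\mathscr{A})={\rm Gproj}(\Lambda)$. Moreover, by the definitions recalled in the introduction, $\Lambda$ is of finite CM-type if and only if ${\rm Gproj}(\Lambda)$ has, up to isomorphism, only finitely many indecomposable objects, i.e.\ ${\rm Gproj}(\Lambda)$ is of finite representation type.

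With these translations in hand, the equivalences drop out of Corollary \ref{3.2}: statement (i) here is Corollary \ref{3.2}(i); statement (ii) is Corollary \ref{3.2}(ii) read through the identification $\mathscr{A}={\rm GProj}(\Lambda)$; statements (iv) and (v) are Corollary \ref{3.2}(viii) and (ix), where the target ring is an artin algebra rather than merely semiperfect precisely because ${\rm Gproj}(\Lambda)$ is functorially finite over the artin algebra $\Lambda$, so the endomorphism ring of a direct sum of representatives of the indecomposables in ${\rm Gproj}(\Lambda)$ is again an artin algebra. Finally, since $\Lambda\in{\rm Gproj}(\Lambda)$, the extra clause (x) of Theorem \ref{3.1} is available and yields statement (iii). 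Hence (i)$\Leftrightarrow$(ii)$\Leftrightarrow$(iii)$\Leftrightarrow$(iv)$\Leftrightarrow$(v).

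The only point requiring genuine care is the bookkeeping in the first two paragraphs: one must check that ${\rm Gproj}(\Lambda)$ really meets all the hypotheses of Theorem \ref{3.1}/Corollary \ref{3.2} (covariant finiteness in $\Lambda$-mod, and finite presentation of the simple functors on both sides), and that $\underrightarrow{\lim}\,{\rm Gproj}(\Lambda)$ equals ${\rm GProj}(\Lambda)$ and not a strictly larger subcategory of $\Lambda$-Mod. It is this last identification that consumes the virtually Gorenstein hypothesis, through \cite[Remark 4.6]{ab}; everything after it is a dictionary translation of the relevant clauses of Theorem \ref{3.1}.
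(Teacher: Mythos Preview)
Your proposal is correct and follows essentially the same approach as the paper: the paper establishes in the paragraph preceding the corollary that ${\rm Gproj}(\Lambda)$ is resolving and functorially finite (via \cite[Theorem 2.5]{ho} and \cite[Remark 4.6]{ab}) and that ${\rm GProj}(\Lambda)=\underrightarrow{\lim}\,{\rm Gproj}(\Lambda)$ under the virtually Gorenstein hypothesis, then simply declares the result a consequence of Theorem \ref{3.1}. Your write-up is a faithful expansion of this implicit argument, invoking Corollary \ref{3.2} (the artin-algebra specialization of Theorem \ref{3.1}) and matching clauses (i), (ii), (viii), (ix), (x) there to (i)--(v) here.
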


\subsection{$n$-cluster tilting subcategories}
Let $\Lambda$ be an artin algebra and $n$ be a positive integer. Iyama in \cite[Definition 2.2]{I1} defined $n$-cluster tilting subcategories. A full subcategory $\mathscr{X}$ of $\Lambda$-mod is called {\it $n$-cluster tilting} if it is functorially finite and
\begin{align*}
\mathscr{X}&=\{M\in\text{$\Lambda$-mod}\,|\,\mathrm{Ext}^i_\Lambda(M,\mathscr{X})=0 \,\,\, \text{for } 0<i<n\}\\
&=\{M\in\text{$\Lambda$-mod}\,|\,\mathrm{Ext}^i_\Lambda(\mathscr{X},M)=0 \,\,\, \text{for } 0<i<n\}.
\end{align*}

Note that $\Lambda$-mod is the unique $1$-cluster tilting subcategory of $\Lambda$-mod.

An $n$-cluster tilting subcategory $\mathscr{X}$ of $\Lambda$-mod is called of {\em finite type} if the number of isomorphism classes of indecomposable objects in $\mathscr{X}$ is finite.
As a consequence of Theorem \ref{3.1} we have the following result.

\begin{Cor}{\rm (See \cite[Corollary 4.15]{EN})}\label{4.1}
Let $\Lambda$ be an artin algebra, $\mathscr{X}$ be an $n$-cluster tilting subcategory of $\Lambda$-mod and  $\mathscr{A}={\underrightarrow{\lim}}\mathscr{X}$. Then the following statements are equivalent.
\begin{itemize}
\item[$({\rm i})$] $\mathscr{X}$ is of finite type.
\item[$({\rm ii})$] Every module in $\mathscr{X}$ is a direct sum of finitely generated modules.
\item[$({\rm iii})$] $\mathscr{A}$ is pure semisimple.
\item[$({\rm iv})$] Any family of homomorphisms between indecomposable modules in $\mathscr{X}$ is noetherian.
\item[$({\rm v})$] ${\rm Mod}(\mathscr{X})$ is locally finite.
\item[$({\rm vi})$] $\mathscr{X}(X,-)$ is finite for each $X \in \mathscr{X}$.
\item[$({\rm vii})$] $\mathscr{X}(S,-)$ is finite for each simple left $\Lambda$-module $S$.
\item[$({\rm viii})$] There is an additive equivalence between $\mathscr{A}$ and the category of projective modules over an artin algebra.
\item[$({\rm ix})$] There is an additive equivalence between $\mathscr{X}$ and the category of finitely generated projective modules over an artin algebra.
\item[$({\rm x})$] Every module in $\mathscr{A}$ is a direct sum of indecomposable modules.
\end{itemize}
\end{Cor}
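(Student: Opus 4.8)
The plan is to reduce the statement to Corollary \ref{3.2} (equivalently, to Theorem \ref{3.1}). The first point to make is that, by Iyama's definition, an $n$-cluster tilting subcategory $\mathscr{X}$ of $\Lambda$-mod is in particular functorially finite. Moreover $\Lambda\in\mathscr{X}$: since $\mathrm{Ext}^i_\Lambda(P,-)=0$ for every projective $P$ and every $i>0$, the defining $\mathrm{Ext}$-orthogonality conditions are vacuously satisfied by $\Lambda$, so in fact $\mathrm{proj}(\Lambda)\subseteq\mathscr{X}$. Hence $\mathscr{X}$ is a functorially finite subcategory of $\Lambda$-mod with $\Lambda\in\mathscr{X}$, which is precisely the situation covered by the last part of Corollary \ref{3.2}.

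Next I would invoke the one input that makes Corollary \ref{3.2} (and thus Theorem \ref{3.1}) available, namely that every simple object of $\mathrm{Mod}(\mathscr{X})$ and of $\mathrm{Mod}(\mathscr{X}^{\mathrm{op}})$ is finitely presented. As recalled just before Corollary \ref{3.2}, this holds for any functorially finite subcategory of the module category of an artin algebra, by \cite[Theorem 2.3]{au} and \cite[Proposition 3.2]{auu}. Consequently $\mathscr{A}=\underrightarrow{\lim}\mathscr{X}$ is a locally finitely presented category with products, $\mathrm{fp}(\mathscr{A})=\mathscr{X}$, and the functor $A\mapsto\mathscr{X}(-,A)$ identifies $\mathscr{A}$ with $\mathrm{Flat}(\mathscr{X})$, so that all the conditions listed in Corollary \ref{4.1} are genuine statements about the pair $(\mathscr{X},\mathscr{A})$ in the present setting.

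With these observations the corollary follows at once: Corollary \ref{3.2} yields the equivalence of (i)--(ix), and since $\Lambda\in\mathscr{X}$ it also adds condition (x) to the list. The only place that deserves a word is the improvement from ``semiperfect ring'' to ``artin algebra'' in (viii) and (ix): when $\mathscr{X}$ has finitely many indecomposables $V_1,\dots,V_m$, the ring $\mathrm{End}_\Lambda(\bigoplus_{i=1}^m V_i)$ is finitely generated over the commutative artinian ground ring of $\Lambda$ and hence is itself an artin algebra, while conversely the category of finitely generated projectives over an artin algebra has only finitely many indecomposables; this is already built into the proof of Corollary \ref{3.2}. I therefore do not expect any real obstacle here --- all the substance is in Theorem \ref{3.1}, and the $n$-cluster tilting hypothesis is used only to produce functorial finiteness, membership of $\Lambda$ in $\mathscr{X}$, and hence the finite presentation of the simple functors.
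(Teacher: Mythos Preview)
Your proposal is correct and follows exactly the paper's approach: the paper simply states the corollary ``as a consequence of Theorem \ref{3.1}'' (equivalently Corollary \ref{3.2}), relying on the facts you spell out --- that an $n$-cluster tilting subcategory is functorially finite by definition and contains $\Lambda$ --- so that Corollary \ref{3.2} applies verbatim, including condition (x). Your additional remarks on why ``artin algebra'' replaces ``semiperfect ring'' in (viii)--(ix) are already absorbed into Corollary \ref{3.2}, so no extra work is needed there either.
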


Ebrahimi and the second author in \cite{EN} introduced pure semisimple $n$-cluster tilting subcategories. An $n$-cluster tilting subcategory $\mathscr{X}$ of $\Lambda$-mod is called {\it pure semisimple} if ${\rm Add}(\mathscr{X})$ is an $n$-cluster tilting subcategory of $\Lambda$-Mod \cite[Definition 2.13]{EN}. Corollary \ref{4.1} and \cite[Corollary 4.15]{EN} show that an $n$-cluster tilting subcategory $\mathscr{X}$ of $\Lambda$-mod is pure semisimple if and only if ${\underrightarrow{\lim}}\mathscr{X}$ is pure semisimple as locally finitely presented category.

\section*{acknowledgements}
The research of the first author was in part supported by a grant from IPM. Also, the research of the second author was in part supported by a grant from IPM (No. 1400170417).

\end{document}